\newcommand{\vk}{\mathbf{k}}
\newcommand{\vc}{\mathbf{c}}
\newcommand{\vx}{\mathbf{x}}
\newcommand{\vb}{\mathbf{b}}
\newcommand{\vy}{\mathbf{y}}
\newcommand{\vnil}{\mathbf{0}}
\newcommand{\supp}{\mathrm{supp}}
\newcommand{\N}{\mathbb{N}}
\newcommand{\R}{\mathbb{R}}
\newcommand{\Z}{\mathbb{Z}}
\newcommand{\eps}{\epsilon}
\newcommand{\md}[1]{\ (\text{mod}\ #1)}
\newcommand{\ol}{\overline}
\newcommand{\starsum}[1]{\sideset{}{{}^*}\sum_{#1}}
\newtheorem{thm}{Theorem}[section]
\newtheorem{lem}[thm]{Lemma}
\newtheorem{cor}[thm]{Corollary}
\newtheorem{prop}[thm]{Proposition}
\theoremstyle{definition}
\newtheorem*{rem}{Remark}
\newtheorem{ex}{Example}
\numberwithin{equation}{section}
\begin{document}
\title{Weak approximation results for quadratic forms in four variables}
\author{Sofia Lindqvist\\Mathematical Institute, Oxford}
\address{Mathematical Institute, University of Oxford, Woodstock
Rd, Oxford OX2 6GG, UK}
\email{sofia.lindqvist@maths.ox.ac.uk}
\date{\today}

\begingroup
\let\MakeUppercase\relax 
\maketitle
\endgroup
\begin{abstract}
Let $F$ be a quadratic form in four variables, let $m\in\N$ and let $\vk\in \Z^4$. We count integer solutions to $F(\vx)=0$ with $\vx\equiv \vk\md{m}$. One can compare this to the similar problem of counting solutions to $F(\vx)=0$ without the congruence condition. It turns out that adding the congruence condition sometimes gives a very different main term than the homogeneous case. In particular, there are examples where the number of primitive solutions to the problem is $0$, while the number of unrestricted solutions is nonzero.
\end{abstract}

\section{Introduction}
Let $F$ be a non-singular quadratic form in $4$ variables with integer coefficients, and let $m\in \N,\vk\in \Z^4$ satisfy $F(\vk)\equiv 0 \md{m}$. We are interested in counting integer solutions to
\begin{equation}
F(\vx) = 0,\quad\vx \equiv \vk \md{m}
\label{eq:problem}
\end{equation}
inside a box of width $P$, as $P\to \infty$. We also assume that $(m,\vk)=1$, as otherwise one could just cancel the common factor everywhere. 

The same problem without the congruence condition $\vx\equiv\vk\md{m}$ was solved in \cite{heathbrown96}, and we will adopt the same methods here. We will therefore count solutions with a smooth weight $w:\R^4\to [0,\infty)$ with compact support. Fix a quadratic form $F$ and let
\[ N_{F,w}(P,m,\vk) = \sum_{\substack{\vx\in\Z^4\\\vx\equiv\vk \md{m}\\ F(\vx) =0}} w(P^{-1}\vx).\]
As in \cite{heathbrown96} we will require that $\nabla F \ne \vnil$ on the closure of $\supp(w)$. In particular this implies that $\supp(w)$ does not contain the origin. From now on we assume that this technical condition is satisfied without further comment.

For the corresponding quantity
\[ N_{F,w}(P) = \sum_{\substack{\vx\in \Z^4\\ F(\vx)=0}} w(P^{-1}\vx),\]
Heath-Brown obtains the following result.
\begin{thm}[{\cite[theorems 6 \& 7]{heathbrown96}}]\label{thm:hb?}
Let $F$ be a non-singular quadratic form in $4$ variables and let $w$ be a smooth weight function with compact support. If the determinant of $F$ is not a square it holds that
\[N_{F,w}(P) = \sigma_\infty(F,w) L(1,\chi) \sigma^*(F) P^2 + O_{F,w,\eps}(P^{3/2+\eps}) , \]
where $\chi$ is the Jacobi symbol $\big( \frac{\det(F)}{\cdot} \big)$. 

If the determinant of $F$ is a square it holds that
\[N_{F,w}(P) = \sigma_\infty(F,w) \sigma^*_\text{sq}(F)P^2\log P + \sigma_1(F,w)P^2+ O_{F,w,\eps}(P^{3/2+\eps}),  \]
where $\sigma_1(F,w)$ is some quantity not depending on $P$.
\end{thm}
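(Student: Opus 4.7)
The plan is to apply Heath-Brown's smooth $\delta$-symbol identity to detect the equation $F(\vx)=0$, writing
\[ N_{F,w}(P) = \sum_{\vx\in\Z^4} w(\vx/P)\, \delta(F(\vx)), \]
and expanding $\delta(n)$ as a weighted double sum over moduli $q$ and residues $a$ coprime to $q$ with a smooth cutoff at scale $Q\asymp P$. After splitting $\vx$ into residue classes modulo $q$ and applying Poisson summation, one rewrites $N_{F,w}(P)$ as a sum over dual frequencies $\vc\in\Z^4$ of products $S_q(a,\vc)\, I_q(\vc)$, where $S_q(a,\vc)=\sum_{\vx\md{q}} e_q(aF(\vx)+\vc\cdot\vx)$ is a complete quadratic exponential sum modulo $q$ and $I_q(\vc)$ is an oscillatory integral against $w$.

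The next step is to isolate the main term, which comes from $\vc=\vnil$. The sum $\sum_{a\md{q}}^* S_q(a,\vnil)$ is multiplicative in $q$, and a direct Gauss-sum evaluation in four variables shows that at each good prime $p\nmid 2\det(F)$ it equals essentially $\chi(p)\,p^3$, where $\chi=\big(\tfrac{\det F}{\cdot}\big)$. Combined with the weight $q^{-4}$ from Poisson and with the archimedean density $\sigma_\infty(F,w)$ coming from $I_q(\vnil)$, this produces a Dirichlet series of terms $\chi(q)/q$ (up to finite Euler factors); when $\det(F)$ is not a square, $\chi$ is non-principal, the series converges conditionally to $L(1,\chi)\sigma^*(F)$, and one obtains the $L(1,\chi)\sigma^*(F)\sigma_\infty(F,w)P^2$ leading term. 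When $\det(F)$ is a square, $\chi$ is principal and the series diverges; truncating at $q\le Q$ and applying partial summation extracts a $P^2\log P$ term with coefficient $\sigma_\infty(F,w)\sigma^*_{\text{sq}}(F)$ along with a residual constant $\sigma_1(F,w)P^2$, explaining the structural difference between the two cases.

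The main obstacle is bounding the $\vc\ne\vnil$ contribution by $O_{F,w,\eps}(P^{3/2+\eps})$. This requires three ingredients combined uniformly in $q$ and $\vc$: Weil-type square-root cancellation for $S_q(a,\vc)$, available because $F$ is nonsingular modulo primes $p\nmid 2\det(F)$; repeated integration by parts in $I_q(\vc)$ to gain arbitrary powers of $(P|\vc|/q)^{-1}$ when this quantity is large, exploiting $\nabla F\ne\vnil$ on $\supp(w)$ so that the phase has no degenerate critical points; and a stationary phase analysis in the transitional range $|\vc|\asymp q/P$, where the critical locus lies on the projective quadric $F=0$. Balancing these estimates with the optimal choice $Q\asymp P$ delivers a half-power saving over the trivial bound $P^3$, which is the ceiling of what the delta method achieves for generic four-variable nonsingular forms; sharpening the exponent beyond $3/2$ would require genuinely new input (e.g.\ deeper cancellation in the sums over $\vc$ on the quadric), and this is the point at which the argument is most delicate.
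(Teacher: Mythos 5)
Your overall strategy is the correct one---this is exactly Heath-Brown's $\delta$-method argument, which the paper does not reprove but simply cites, and whose congruence-restricted analogue is what Sections \ref{sec:hb}--\ref{sec:proof} carry out. However, there is a genuine gap in your treatment of the square-determinant case. You assert that the entire $\vc\ne\vnil$ contribution is $O_{F,w,\eps}(P^{3/2+\eps})$ and that $\sigma_1(F,w)P^2$ is merely the residual constant left over from partial summation of the divergent $\vc=\vnil$ Dirichlet series. That is not what happens: when $\det M$ is a square, the frequencies $\vc\ne\vnil$ lying on the dual quadric $M^{-1}(\vc)=0$ contribute a genuine term of order $P^2$, and this contribution is part of $\sigma_1(F,w)$. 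The obstruction is arithmetic, not archimedean. For such $\vc$ one can complete the square, $aF(\vb)+\vc\cdot\vb = aF(\vb+(2a)^{-1}M^{-1}\vc)-(4a)^{-1}M^{-1}(\vc)$, so that $S_q(\vc)$ essentially reduces to $S_q(\vnil)$ away from bad primes; the averaged sums $\sum_{q\le X}S_q(\vc)$ then carry a main term of order $X^4$ precisely because the relevant character is principal in the square-determinant case (this is Heath-Brown's Lemma 30; compare Lemmas \ref{lem:ssum-twisted} and \ref{lem:cnonzero} here, where $\eta(\vc)$ records exactly the condition $M^{-1}(\vc)=0$). No combination of Weil-type square-root cancellation and stationary phase in $I_q(\vc)$ can push these terms below $P^2$, since the cancellation you need simply is not present in the $q$-average; instead one must extract their main term by partial summation against $q^{-4}I_q(\vc)$, producing the quantities $\sigma_\infty(F,w,\vc)$ summed over the dual quadric.

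This is not a pedantic point: the same mechanism is what generates the bias term $\tau(F,w,m,\vk)$ in Theorem \ref{thm:main2} even for non-square determinant once a congruence condition is imposed, so locating the second $P^2$ term correctly is essential. A smaller remark: even for $M^{-1}(\vc)\ne 0$, the exponent $3/2+\eps$ is not obtained from pointwise bounds on $S_q(\vc)$ alone (these can be as large as $q^3$ for square $q$); one needs the averaged cancellation $\sum_{q\le X}S_q(\vc)\ll X^{7/2+\eps}$ of Heath-Brown's Lemma 28, exploited via partial summation.
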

Here $\sigma_\infty(F,w)$ denotes the singular integral
\begin{equation}
\sigma_\infty(F,w) = \lim_{\eps\to 0} \frac{1}{2\eps}\int_{|F(\vx)|\le \eps} w(\vx) d\vx,
\label{eq:sigmainfty}
\end{equation}
which can be shown to be positive if $w(\vx)>0$ for some real solution $\vx$ to $F(\vx)=0$. For quadratic forms in fewer than five variables the usual Hardy--Littlewood singular series may converge only conditionally or not at all, which is why the above main terms have a modified singular series $\sigma^*(F)$ or $\sigma^*_\text{sq}(F)$, which we define later. In exactly the same way as for the usual singular series one can show that these are positive provided $F(\vx)=0$ has a nonzero $p$-adic solution for every prime $p$.

Naively one would perhaps expect $N_{F,w}(P;m,\vk)$ to have a similar looking asymptotic expression as $N_{F,w}(P)$. Defining $F'(\vx) = F(m\vx+\vk)$ and $w'(\vx) = w(m\vx+\vk)$ it seems reasonable to expect $N_{F,w}(P;m,\vk)$ to have the main term predicted by Theorem \ref{thm:hb?} for $N_{F',w'}(P)$, keeping in mind that the theorem is not applicable in this case, as $F'$ is not a quadratic form. Rather surprisingly this turns out to not always be the case.

As an example of a case where the above prediction does not give the correct answer, consider the following. Let $p,q$ be odd primes satisfying $p\equiv q\equiv 1 \md{8}$ and set
\[F(\vx) = x_1^2-pqx_2^2-x_3 x_4.\]
Let $k$ satisfy $\left( \frac{k}{p} \right)=1$ and $\left( \frac{k}{q} \right)=-1$. We set $m=pq$ and $\vk = (a,b,k,k)$, where $a,b$ are any integers satisfying $F(\vk)\equiv 0\md{pq}$, e.g. $a=k,b=0$. We shall show in Section \ref{sec:ex} that then in fact
\[ N_{F,w}(P;m,\vk) \sim C P^2  \Big(1-\frac{L(2,\chi_\Delta)}{L(2,\chi_0)}\Big),\]
where $C$ is the leading coefficient predicted by Theorem \ref{thm:hb?}, $\chi_0$ is the principal Dirichlet character of conductor $pq$ and $\chi_\Delta = \left( \frac{\cdot}{pq} \right)$.

To understand why the above example has the ``wrong'' number of solutions we need to look at primitive solutions. We say that a vector $\vx$ is \emph{primitive} if the coordinates of $\vx$ have no common factor. Let
\[ N_{F,w}^\text{prim}(P,m,\vk) = \sum_{\substack{ \vx \text{ primitive}\\\vx\equiv \vk\md{m}\\F(\vx)=0}}w(P^{-1}\vx).\]
The quantities $N_{F,w}(P,m,\vk)$ and $N_{F,w}^\text{prim}(P,m,\vk)$ are then related via the formulae
\begin{equation}
N_{F,w}(P,m,\vk) = \sum_{\substack{d=1\\(d,m)=1}}^\infty N_{F,w}^\text{prim}(P/d,m,\ol{d}^{(m)} \vk)
\label{eq:n-normal}
\end{equation}
and
\begin{equation}
N_{F,w}^\text{prim}(P,m,\vk) = \sum_{\substack{d=1\\(d,m)=1}}^\infty \mu(d) N_{F,w}(P/d,m,\ol{d}^{(m)}\vk),
\label{eq:n-prim}
\end{equation}
where $\ol{d}^{(m)}$ is the inverse of $d$ modulo $m$.

Assume that $\vx$ is a primitive solution to \eqref{eq:problem} for the above example. Let $r|x_3$ be an odd prime. Then  $x_1^2-pqx_2^2\equiv 0\md{r}$, and so either $\left( \frac{pq}{r} \right)=\left( \frac{r}{pq} \right) = 1$ or $r|x_1$ and $r|x_2$. In the second case, we can't have $r=p$ or $r=q$, and since $\vx$ is primitive we then get $r^2|x_3$. Repeating this argument we see that any odd prime factor of $x_3$ either appears as an even power or is a quadratic residue mod $pq$. Noting that $\left( \frac{2}{pq} \right)=1$ and $\left( \frac{-1}{pq} \right)=1$ this then gives that in fact $\left( \frac{x_3}{pq} \right) = 1$, a contradiction. This shows that the example has no primitive solutions. By \eqref{eq:n-normal} we then expect that also the number of unrestricted solutions is biased, which is exactly what we observe.

Unfortunately we are not able to fully understand when the above phenomenon occurs. Let $\Delta$ be the conductor of the Dirichlet character $\chi_\Delta = \left( \frac{\det F}{\cdot} \right)$. What we are able to say is that $N_{F,w}(P;m,\vk)$ takes the expected form if $\det M$ is a square or if $\Delta$ has some prime factor that doesn't divide $m$. On the other hand, if all prime factors of $\Delta$ divide $m$ then there are two terms of order $P^2$ in the asymptotic expression for $N_{F,w}(P;m,\vk)$, namely the usual one and some sort of bias term. 

The bias term appears in the analysis in a similar way as the term $\sigma_1(F,w)P^2$ in Theorem \ref{thm:hb?} for the square determinant case. The quantity $\sigma_1(F,w)$ is not well understood, to the extent that there doesn't even seem to be a good way of predicting if it is nonzero. It is therefore not too surprising that it is hard to say anything quantitative about the bias term in our case. On the other hand, we are able to extract some qualitative information, so that knowing $N_{F,w}(P;m,\vk)$ or $N_{F,w}^\text{prim}(P;m,\vk)$ for some value of $\vk$ is enough to compute $N_{F,w}(P;m,d\vk)$ and $N_{F,w}^\text{prim}(P;m,d\vk)$ for any multiple $d\vk$ of $\vk$.
\begin{thm}\label{thm:main}
Let $F$ be a non-singular quadratic form in $4$ variables with underlying matrix $M$, let $m\in \N$ and $\vk\in \Z^4$ be such that $F(\vk)\equiv 0 \md{m}$, and let $w$ be a smooth weight function. Then, if $\det M$ is a square it holds that
\[
\begin{split}
N_{F,w}(P;m,\vk) = &\sigma_\infty(F,w) \sigma^*_\text{sq}(F,m,\vk) m^{-4}P^2\log P \\&+ \sigma_1(F,w,m,\vk)P^2 + O_{F,w,m,\eps}(P^{3/2+\eps})
\end{split}
\]
for any $\eps>0$. Here $\sigma_\infty(F,w)$ is as in \eqref{eq:sigmainfty} and $\sigma^*_\text{sq}(F,m,\vk)$ is the singular series, the definition of which will be given \eqref{eq:sigma-star}. The coefficient $\sigma_1(F,w,m,\vk)$ is some constant not depending on $P$.
\end{thm}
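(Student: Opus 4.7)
The plan is to adapt the smooth delta-symbol treatment of $N_{F,w}(P)$ in \cite{heathbrown96} to incorporate the congruence condition $\vx \equiv \vk \md{m}$. The cleanest route is to parametrize solutions by $\vx = m\vy + \vk$ with $\vy \in \Z^4$, so that the problem becomes counting integer $\vy$ satisfying $F(m\vy+\vk)=0$, weighted by the smooth compactly supported function $\vy \mapsto w(P^{-1}(m\vy+\vk))$. Applying Heath-Brown's delta symbol to detect $F(m\vy+\vk)=0$ yields a decomposition of the shape
\[
N_{F,w}(P;m,\vk) \;=\; \frac{c_Q}{Q^2}\sum_{q=1}^\infty q^{-4}\sum_{\vc\in\Z^4} S_q(\vc;m,\vk)\,I_q(\vc;P),
\]
with $Q$ of order $P$, $c_Q = 1 + O_N(Q^{-N})$, complete exponential sums $S_q(\vc;m,\vk)$ in $a$ and $\vy$ with phase $(a F(m\vy+\vk) + \vc\cdot\vy)/q$, and smooth oscillatory integrals $I_q(\vc;P)$ depending on $w$ and $\vk$.

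Next I would analyze the multiplicative structure of the sums $S_q(\vc;m,\vk)$. For each prime power $p^e \| q$, the local factor splits according to whether $p$ divides $m$: when $p \nmid m$ the substitution $\vy \mapsto m^{-1}(\vy-\vk)$ is bijective modulo $p^e$ and the local factor reduces to Heath-Brown's original one; when $p \mid m$, the factor $m$ in the argument together with the constraint $F(\vk) \equiv 0 \md{m}$ produces a modified local density, and this is where the dependence on $(m,\vk)$ enters. These local densities, evaluated by standard quadratic Gauss sum identities, assemble into the Euler product $\sigma^*_\text{sq}(F,m,\vk) = \prod_p \sigma_p(F,m,\vk)$ referred to in \eqref{eq:sigma-star}. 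Because $\det M$ is a square, the twisting character $\chi_\Delta$ is principal and, as in \cite{heathbrown96}, the Dirichlet series assembling the local densities factors as $\zeta(s)$ times an absolutely convergent part; the simple pole of $\zeta(s)$ at $s=1$ produces the main term $\sigma_\infty(F,w)\sigma^*_\text{sq}(F,m,\vk)m^{-4}P^2\log P$, the factor $m^{-4}$ being the Jacobian of the substitution $\vx = m\vy+\vk$. The constant in the Laurent expansion of this $\zeta$-like series, together with contributions from secondary parts of $I_q$ and from all convergent Euler factors, aggregate into the lower-order $\sigma_1(F,w,m,\vk)P^2$, which, as for the analogous $\sigma_1(F,w)$ in Theorem \ref{thm:hb?}, cannot in general be put in closed form. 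The contributions from $\vc \ne \vnil$ and from large $q$ are then controlled via Kloosterman/Weyl-type estimates on $S_q$ combined with stationary-phase estimates on $I_q$, exactly as in \cite{heathbrown96}, yielding the error $O_{F,w,m,\eps}(P^{3/2+\eps})$.

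The principal obstacle is the local analysis at primes $p \mid m$: there $\vy \mapsto F(m\vy+\vk)$ degenerates modulo $p$, so the standard Gauss sum computations must be modified and one has to track carefully how the relation $F(\vk) \equiv 0 \md{m}$ determines the $p$-adic density of solutions. The key point is to verify that the Euler factor at such a $p$ is absolutely convergent on $\mathrm{Re}(s) \ge 1$ and preserves the meromorphic structure of the whole series, so that the simple pole of $\zeta(s)$, and nothing else, produces the $P^2\log P$ term; once this local computation is in place, the extraction of the main term and the bundling of the remaining convergent contribution into $\sigma_1(F,w,m,\vk)$ is a matter of careful bookkeeping.
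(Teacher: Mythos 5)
Your setup (substituting $\vx = m\vy+\vk$, applying the $\delta$-method to $F(m\vy+\vk)$, extracting the $m^{-4}$ Jacobian, and analysing the $\vc=\vnil$ Dirichlet series whose pole at $s=1$ gives the $P^2\log P$ term) matches the paper's route, and your treatment of the $\vc=\vnil$ term is essentially Lemma \ref{lem:c0}. However, there is a genuine gap in your handling of $\vc\ne\vnil$: you assert that all such contributions are "controlled \ldots yielding the error $O_{F,w,m,\eps}(P^{3/2+\eps})$." This is false. The vectors $\vc\ne\vnil$ lying on the dual quadric $M^{-1}(\vc)=0$ contribute a genuine term of order $P^2$: the partial sums $\sum_{q\le X} e_{mq}(\vc\cdot\vk)S_q(\vc;m,\vk)$ have a main term of size $X^4$ for such $\vc$ (Lemma \ref{lem:cnonzero}), and after partial summation against $q^{-4}I_q(\vc/m)$ this produces $\eta(\vc)\sigma'(F,m)A(\vc,m,\vk)\sigma_\infty(F,w,\vc/m)P^4$ per $\vc$, hence a $P^2$ contribution to $\sigma_1(F,w,m,\vk)$. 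This is already the situation in Heath-Brown's Theorem 7 for $N_{F,w}(P)$, where these terms form part of $\sigma_1(F,w)$; they cannot be absorbed into $O(P^{3/2+\eps})$.

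Relatedly, your proposed prime-by-prime local factorization of $S_q(\vc;m,\vk)$ is only valid for $\vc=\vnil$. For $\vc\ne\vnil$ the factorization \eqref{eq:suv} twists the argument $\vc$ by modular inverses of the complementary factor, and the phase $e_{mq}(\vc\cdot\vk)$ coming from the change of variables in $I_q$ destroys multiplicativity in $q$ altogether. This is the main technical point of the paper: one writes $q=uv$ with $(u,m)=1$ and $v\mid m^\infty$, expands the $u$-dependence of the phase into Dirichlet characters modulo $mv$, and proves a twisted asymptotic for $\sum_{u\le X}\chi(u)S_u(\vc)$ (Lemma \ref{lem:ssum-twisted}), which assembles into Lemma \ref{lem:cnonzero}. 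Without this step you have no way to evaluate the order-$P^2$ contribution from the dual quadric, so the constant $\sigma_1(F,w,m,\vk)$ in your argument is not actually accounted for. The rest of your outline (the local densities at $p\mid m$, the Euler product \eqref{eq:sigma-star}, the Jacobian factor $m^{-4}$) is consistent with the paper.
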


\begin{thm}\label{thm:main2}
Let $F$ be a non-singular quadratic form in $4$ variables with underlying matrix $M$, let $m\in \N$ and $\vk\in \Z^4$ be such that $F(\vk)\equiv 0 \md{m}$, and let $w$ be a smooth weight function. Then if $\det M$ is not a square it holds that, for any $\eps>0$,
\[
\begin{split}
N_{F,w}(P;m,\vk) =  \big(\sigma_\infty(F,w)L(1,\chi_\Delta)\sigma^*(F,m,\vk) + \tau(F,w,m,\vk)\big)m^{-4}P^2\\ + O_{F,w,\eps,m}(P^{5/3+\eps}),
\end{split}
\]
where we postpone the definition of the singular series $\sigma^*(F,m,\vk)$ until \eqref{eq:sigma-star2}, $\chi_\Delta = \left( \frac{\det M}{\cdot} \right)$, and $\tau(F,w,m,\vk)$ is some quantity not depending on $P$ that satisfies the following two properties.
\begin{itemize}
\item $\tau(F,w,m,\vk) =0$ unless every prime factor of $\Delta$ divides $m$;
\item If $(d,m)=1$ then $\tau(F,w,m,d\vk) = \chi_\Delta(d) \tau(F,w,m,\vk)$.
\end{itemize}

\end{thm}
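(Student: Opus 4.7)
The plan is to adapt Heath-Brown's delta method from \cite{heathbrown96} to handle the congruence condition $\vx \equiv \vk \md{m}$ directly. Writing the indicator of $F(\vx)=0$ via the Duke--Friedlander--Iwaniec smooth delta symbol
\[
\delta(n) = c_Q Q^{-2} \sum_{q \ge 1} q^{-1} \starsum{a \md{q}} e_q(an)\, h(q/Q, n/Q^2)
\]
with $Q \asymp P$, and parametrising the congruence class by $\vx = m\vy + \vk$, Poisson summation in $\vy \in \Z^4$ transforms the counting function into
\[
N_{F,w}(P; m, \vk) = c_Q Q^{-2} m^{-4} \sum_{q \ge 1} q^{-1} \sum_{\vc \in \Z^4} S_{q,m}(\vc; \vk)\, I_q(\vc; P),
\]
where $S_{q,m}(\vc; \vk)$ is a complete quadratic exponential sum to modulus essentially $qm$ depending on $\vc$ and $\vk$, and $I_q$ is an oscillatory integral against $w$ essentially supported on $q \ll Q$ and $\|\vc\| \ll mQ/q$.

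First I would isolate the $\vc = \vnil$ contribution, which produces the leading asymptotic of order $P^2$. The $q$-sum factors as an Euler product whose local factor at each prime $p$ is a $p$-adic density depending on $\vk \md{p^{v_p(m)}}$. Mirroring \S7--8 of \cite{heathbrown96}, the associated Dirichlet series in an auxiliary parameter produces, in the non-square case, two contributions regular at the critical point: a main piece $L(1, \chi_\Delta)\, \sigma^*(F, m, \vk)$ and a residual piece $\tau(F, w, m, \vk)$ coming from character-twisted Euler factors at primes $p \mid \Delta$. The singular integral $\sigma_\infty(F, w)$ is produced by $I_q$ at $\vc = \vnil$, yielding the claimed $m^{-4} P^2$ normalisation.

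The two structural properties of $\tau$ should follow from the shape of its local factors. If some prime $p \mid \Delta$ satisfies $p \nmid m$, the $p$-local factor is exactly Heath-Brown's unmodified density, contributing only to the $L(1,\chi_\Delta)$-type term and producing no residual piece; this yields the vanishing property of $\tau$. For the scaling $\vk \mapsto d\vk$ with $(d, m) = 1$, the change of variable in each Poisson-summed local factor at a prime $p \mid m$ introduces a factor $\chi_p(d)$ precisely when $p \mid \Delta$. Since $\tau \ne 0$ forces every prime of $\Delta$ to divide $m$, and $(d, m) = 1$ then implies $(d, \Delta) = 1$, multiplying these local contributions over $p \mid \Delta$ produces $\chi_\Delta(d)$, as claimed. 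One must separately check that the $L(1,\chi_\Delta)\sigma^*$-piece is invariant under $\vk \mapsto d\vk$ in the appropriate sense, which is a routine verification since $\sigma^*$ is built from $p$-adic densities averaged over residue classes.

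The off-diagonal terms $\vc \ne \vnil$ are bounded by Heath-Brown's uniform estimates on $S_{q,m}$-type sums, yielding the stated error $O_{F,w,\eps,m}(P^{5/3+\eps})$; the congruence condition introduces only additional $m$-dependent factors and does not affect the saving exponent. The main obstacle I anticipate is the careful Euler product bookkeeping at primes dividing $\gcd(m, \Delta)$: identifying how $\vk$ enters each $p$-local factor in a form that cleanly splits the $\vc = \vnil$ contribution into the $L(1,\chi_\Delta)\sigma^*$-piece and the bias $\tau$, and verifying that the latter transforms as $\chi_\Delta$ in $\vk$, is the technical heart of the argument and will require the most work.
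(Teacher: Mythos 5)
Your overall setup (the $\delta$-method applied to $F(m\vx+\vk)$, Poisson summation in the congruence class, isolating $\vc=\vnil$) matches the paper's, but you have misplaced the source of the bias term $\tau$, and that is precisely the subtle point of the theorem. You claim that the $\vc=\vnil$ contribution splits into a main piece $L(1,\chi_\Delta)\sigma^*(F,m,\vk)$ plus a residual piece $\tau$ coming from twisted Euler factors at $p\mid\Delta$, and that the off-diagonal terms $\vc\ne\vnil$ are absorbed into the error $O(P^{5/3+\eps})$. Neither claim is correct. The sum $S_q(\vnil;m,\vk)$ is genuinely multiplicative in $q$ (split $q=uv$ with $(u,m)=1$ and $v\mid m^\infty$), so $\sum_{q\le X}q^{-4}S_q(\vnil;m,\vk)$ is governed by an Euler product whose analytic behaviour is that of $L(1+s,\chi_\Delta)$, which for non-square $\det M$ is regular at $s=0$: the diagonal contributes the single constant $L(1,\chi_\Delta)\sigma^*(F,m,\vk)$ and nothing more (this is Lemma~\ref{lem:c0}). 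There is no residual piece at $\vc=\vnil$.

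The term $\tau$ in fact arises from the frequencies $\vc\ne\vnil$ with $M^{-1}(\vc)=0$, which you have discarded. For $\vc\ne\vnil$ the sums $S_q(\vc;m,\vk)$ are only twisted-multiplicative: writing $q=uv$ as above, the $v$-part depends on $u$ through $\ol{u}^{(v)}\vc$ and through the phase $e_{mv}(\ol{u}^{(mv)}\vc\cdot\vk)$. Expanding this $u$-dependence into Dirichlet characters modulo $mv$, one must evaluate $\sum_{u\le X}\chi(u)S_u(\vc)$, and this has a main term of size $X^4$ exactly when $M^{-1}(\vc)=0$ and $\chi=\chi_0\chi_\Delta$ (Lemma~\ref{lem:ssum-twisted}): the oscillation of $S_u(\vc)$ in $u$ is essentially $\chi_\Delta(u)$, and the congruence condition supplies the conjugate character precisely when $\Delta\mid m^\infty$, so that $\chi_0\chi_\Delta$ can occur as a character modulo $mv$. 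This produces a genuine contribution of order $P^2$ from $\vc\ne\vnil$ and simultaneously yields both structural properties of $\tau$: it vanishes unless every prime factor of $\Delta$ divides $m$ (otherwise $\chi_0\chi_\Delta$ never occurs modulo $mv$), and the substitution $\vb\mapsto d\vb$, $x\mapsto dx$, $a\mapsto d^2a$ in the coefficient of $\chi_0\chi_\Delta$ extracts the factor $\chi_\Delta(d)$ (Lemma~\ref{lem:A-trans}). As written, your argument would produce no $\tau$ at all and would predict the unbiased main term, contradicting the paper's opening example in which there are no primitive solutions; moreover, since the $\vc\ne\vnil$ contribution is genuinely of size $P^2$, it cannot be bounded into an error term of size $P^{5/3+\eps}$.
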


Unfortunately we are not able to say much else about the quantity $\tau(F,w,m,\vk)$ than the two properties listed in the theorem. These do however immediately imply the following corollary, which in particular tells us that in projective space everything works out as expected.
\begin{cor}\label{thm:cor}
Let $F,w,m$ and $\vk$ be as in Theorem \ref{thm:main2} and assume that every prime factor of $\Delta$ divides $m$. Then if $\chi_\Delta(d)=1$ it holds that, for any $\eps>0$,
\[N_{F,w}(P;m,\vk) - N_{F,w}(P;m,d\vk) = O_{F,w,m,\eps}(P^{5/3+\eps}).\]

Furthermore, it holds that
\[
\begin{split}
\sum_{\substack{d\md{m}\\ (d,m)=1}} N_{F,w}(P;m,d\vk) = \sigma_\infty(F,w)L(1,\chi_\Delta)\sigma^*(F,m,\vk)\phi(m)m^{-4}P^2 \\+ O_{F,w,\eps,m}(P^{5/3+\eps})
\end{split}
\]
for any $\eps>0$.
\end{cor}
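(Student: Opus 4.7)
The plan is to deduce both assertions directly from Theorem \ref{thm:main2}, using (a) the transformation rule $\tau(F,w,m,d\vk)=\chi_\Delta(d)\tau(F,w,m,\vk)$ stated there, and (b) the analogous invariance $\sigma^*(F,m,d\vk)=\sigma^*(F,m,\vk)$ whenever $(d,m)=1$. I expect to verify (b) directly from the definition \eqref{eq:sigma-star2}: writing $\sigma^*$ as a product of local densities, only primes $p\mid m$ see any dependence on $\vk$, and at each such $p$ the substitution $\vx\mapsto d^{-1}\vx$, which makes sense modulo $p^n$ because $(d,p)=1$, gives a bijection between the local count for $d\vk$ and the one for $\vk$, since $F(d^{-1}\vx)=d^{-2}F(\vx)$ preserves the condition $F(\vx)\equiv 0\md{p^n}$.

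For the first statement I apply Theorem \ref{thm:main2} to both $N_{F,w}(P;m,\vk)$ and $N_{F,w}(P;m,d\vk)$ and subtract. The error terms combine into $O_{F,w,m,\eps}(P^{5/3+\eps})$. For the $P^2$-terms, the invariance (b) cancels the singular series contribution, while the hypothesis $\chi_\Delta(d)=1$ together with (a) cancels the $\tau$ contribution. For the second statement, I sum the expansion over $d\md{m}$ with $(d,m)=1$. The singular series part contributes $\phi(m)\sigma_\infty(F,w)L(1,\chi_\Delta)\sigma^*(F,m,\vk)m^{-4}P^2$ by (b), while the $\tau$-part contributes
\[
\tau(F,w,m,\vk)\,m^{-4}P^2\sum_{\substack{d\md{m}\\(d,m)=1}}\chi_\Delta(d).
\]
Since every prime factor of $\Delta$ divides $m$, the values $\chi_\Delta(d)$ are all non-zero on the range of summation, and $\chi_\Delta$ descends to a non-trivial character of $(\Z/m\Z)^\times$ (non-trivial because $\det M$ is not a square). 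Character orthogonality then makes this inner sum vanish, eliminating the $\tau$-contribution.

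The only non-routine point is the invariance (b), which is not explicitly recorded in Theorem \ref{thm:main2}; once it is pinned down from \eqref{eq:sigma-star2}, the rest of the argument amounts to applying Theorem \ref{thm:main2} and invoking orthogonality of Dirichlet characters.
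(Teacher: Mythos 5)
Your proposal is correct and follows essentially the same route as the paper: apply Theorem \ref{thm:main2} to each term, cancel the singular series via the invariance $\sigma^*(F,m,d\vk)=\sigma^*(F,m,\vk)$ (which the paper records as \eqref{eq:sigmap-kdep} and you rederive by the unit substitution in the local counts), and kill the $\tau$-contribution using $\tau(F,w,m,d\vk)=\chi_\Delta(d)\tau(F,w,m,\vk)$ together with the vanishing of $\sum_d\chi_\Delta(d)$ over $(\Z/m\Z)^\times$. The paper phrases the last step as ``$\chi_\Delta(d)=-1$ for exactly half the values of $d$'' (noting $\Delta\ne 1,2$) rather than invoking orthogonality, but this is the same fact.
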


The error terms in Theorem \ref{thm:main2} and Corollary \ref{thm:cor} are both worse than the error terms in theorems \ref{thm:hb?} and \ref{thm:main}. This is not because of some actual limitation of the method, indeed one could obtain the same error term in all results by some extra work. However, allowing for a loss of $P^{1/6}$ in the error greatly simplifies the amount of technical work needed, which is why we have chosen to go for a slightly weaker approach here.

We remark that the problem we are trying to solve was mentioned in \cite[discussion surrounding Corollary 3]{heathbrown96}. There Heath-Brown points out that the counting function $N_{F,w}(P;m,\vk)$ may be handled with the same methods as those applied to $N_{F,w}(P)$, which is exactly what we do. On the other hand, the conclusion is perhaps not as straightforward as indicated. In particular, it is not true that excluding the obvious obstructions is enough to guarantee primitive solutions, as our initial example showed.

\subsection*{Notation}
For a $4\times 4$ matrix $A$ and a vector $\vx \in \R^4$ we'll write $A^{-1}(\vx) = \vx^T A^{-1} \vx$. The matrix $M$ will always be the symmetric matrix satisfying $F(\vx) = \vx^T M\vx$ and $\Delta$ will always be the conductor of the Dirichlet character $\left( \frac{\det M}{\cdot} \right)$, which we denote by $\chi_\Delta$. We will write $\chi_0$ for the principal Dirichlet character modulo $m$.

If $(a,b)=1$ we will write $\ol{a}^{(b)}$ for the inverse of $a$ modulo $b$, that is, $\ol{a}^{(b)} a\equiv 1 \md{b}$. The notation $a|b^\infty$ means that if $p|a$ then $p|b$.

We will write
\[\starsum{a\md{q}} = \sum_{\substack{a\md{q}\\ (a,q)=1}}\]
and in general we won't always specify all summation and integration limits if this is clear from context.

For a vector $\vx$ we write $|\vx|$ for the supremum norm $\|\vx\|_\infty$.

We use standard error notation. If $f$ and $g$ are two functions we write $f = O(g), f\ll g$ or $g\gg f$ to mean that there exists some absolute constant $C>0$ such that $f\le Cg$. Quite often this constant will also depend on some other parameters, which we indicate by a subscript. So $f=O_\eps(g)$ and $f\ll_\eps g$ both mean that there exists some constant $C(\eps)>0$ depending only on $\eps$ such that $f\le C(\eps) g$. 

\subsection*{Outline}
In section \ref{sec:hb} we apply the Heath-Brown circle method as described in \cite{heathbrown96} to get an expression for $N_{F,w}(P;m,\vk)$.  Most of the analysis is nearly identical as in \cite{heathbrown96}, and the parts which require some extra care are dealt with in Section \ref{sec:sq}. In Section \ref{sec:proof} we sketch how to prove Theorem \ref{thm:main}, Theorem \ref{thm:main2} and Corollary \ref{thm:cor}. Finally, in Section \ref{sec:ex} we revisit the example mentioned in the introduction together with a second example, and compute $N_{F,w}(P;m,\vk)$ for these two cases.

\subsection*{Acknowledgements} The author would like to thank Roger Heath-Brown for many valuable discussions and suggestions.

This work was partially supported by a grant from the Simons Foundation (award
number 376201 to Ben Green), and Ben Green's ERC starting Grant AAS 279438.
\section{The Heath-Brown circle method}\label{sec:hb}
The starting point of the Heath-Brown circle method is the following, which follows from an identity for the $\delta$-function due to Duke, Friedlander and Iwaniec \cite{dfi}. The method is therefore also sometimes referred to as the $\delta$-method.
\begin{thm}[\cite{heathbrown96} Theorem 2]
Let $F$ be a polynomial in four variables. For any $P>1$ there is a positive constant $c_P$ and an infinitely differentiable function $h(x,y)$ defined on the set $(0,\infty)\times \R$, such that
\[ \sum_{\vx\in \Z^4: F(\vx)=0} w(P^{-1}\vx)  = c_P P^{-2} \sum_{\vc\in\Z^4} \sum_{q=1}^\infty q^{-4} S_q(\vc) I_q(\vc), \]
where
\begin{equation}
S_q(\vc) = \starsum{a\md{q}} \sum_{\vb \md{q}} e_q(aF(\vb)+\vc\cdot\vb)
\label{eq:sq}
\end{equation}
and
\begin{equation}
I_q(\vc) = \int_{\R^4} w(P^{-1}\vx) h\left( \frac{q}{P},\frac{F(\vx)}{P^2} \right) e_q(-\vc\cdot\vx)d\vx.
\label{eq:iq}
\end{equation}
The constant $c_P$ satisfies $c_P = 1 + O_N(P^{-N})$ for any $N>0$.
\label{thm:cm}
\end{thm}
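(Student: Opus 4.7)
The plan is to start from the identity of Duke, Friedlander and Iwaniec that expresses the Kronecker $\delta_{n,0}$ on $\Z$ as a smoothly weighted double sum over additive characters. In the normalization Heath-Brown uses, this provides the constants $c_P$ and the function $h(x,y)$ of the theorem, with $h$ smooth, compactly supported in its first variable, and decaying sufficiently fast in its second variable. Schematically,
\[\delta_{n,0} \;=\; c_P P^{-2} \sum_{q=1}^\infty \starsum{a\md{q}} e_q(an)\, h\!\left(\frac{q}{P},\frac{n}{P^2}\right),\]
and the bound $c_P = 1 + O_N(P^{-N})$ is pinned down by evaluating both sides at $n = 0$. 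I would take these properties of $c_P$ and $h$ as input from \cite{dfi}.

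Setting $n = F(\vx)$ and summing against $w(P^{-1}\vx)$, the left-hand side picks out the counting function $\sum_{F(\vx)=0} w(P^{-1}\vx)$. Since $h(q/P,\cdot)$ vanishes once $q$ exceeds a constant multiple of $P$ and $w$ is compactly supported, the order of summation may be interchanged, and the right-hand side becomes
\[c_P P^{-2} \sum_{q=1}^\infty \starsum{a\md{q}} \sum_{\vx\in\Z^4} w(P^{-1}\vx)\, e_q(aF(\vx))\, h\!\left(\frac{q}{P},\frac{F(\vx)}{P^2}\right).\]

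I would then split the lattice sum by residue class, writing $\vx = q\vy + \vb$ with $\vb \md{q}$ and $\vy \in \Z^4$. The phase $e_q(aF(\vx))$ depends only on $\vb$ and so pulls outside the sum over $\vy$. Applying Poisson summation to that inner sum (of a Schwartz-like function evaluated on the translated lattice $q\Z^4+\vb$), and then changing variables $\vu = q\vy + \vb$ with Jacobian $q^{-4}$, produces
\[\sum_{\vy\in\Z^4} w(P^{-1}(q\vy+\vb))\, h\!\left(\frac{q}{P},\frac{F(q\vy+\vb)}{P^2}\right) \;=\; q^{-4} \sum_{\vc\in\Z^4} e_q(\vc\cdot \vb)\, I_q(\vc),\]
with $I_q(\vc)$ exactly the integral defined in \eqref{eq:iq}. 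Collecting the combined phase $e_q(aF(\vb)+\vc\cdot\vb)$ and summing over $\vb$ and $a$ yields $S_q(\vc)$ as in \eqref{eq:sq}, and a final rearrangement gives the identity claimed in the theorem.

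The main obstacle is really the analytic content behind the DFI identity itself: constructing $h$ with enough decay in its second variable to legitimize Poisson summation and to guarantee absolute convergence of the resulting double sum over $q$ and $\vc$. Once this is granted, integration by parts in $\vx$ shows that $I_q(\vc)$ decays rapidly in $|\vc|/q$, so the sum over $\vc$ converges absolutely, and the outer sum over $q$ is essentially a finite sum over $q \ll P$. Beyond invoking the identity of \cite{dfi}, the remainder of the proof is just the change of variables and Poisson summation sketched above.
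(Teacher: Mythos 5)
Your proposal is correct and follows essentially the same route as the cited source (Heath-Brown's Theorem 2, which this paper quotes without reproving): take the Duke--Friedlander--Iwaniec $\delta$-identity as a black box, substitute $n=F(\vx)$, sum against the weight, and convert the lattice sum into $S_q(\vc)I_q(\vc)$ by splitting into residue classes mod $q$ and applying Poisson summation. The only caveat is one of attribution rather than mathematics: the smooth function $h$ on $(0,\infty)\times\R$ and the bound $c_P=1+O_N(P^{-N})$ are Heath-Brown's refinement of the DFI identity rather than the form stated in \cite{dfi} itself.
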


Note that in the theorem's original form $F$ is a form of degree $d$, but the homogeneity of $F$ isn't actually used in the proof. In addition much more can be said about the function $h$, but instead of stating these properties we will directly cite results from \cite{heathbrown96} involving $I_q(\vc)$. In particular we will use that there is a constant $C>0$ such that $I_q(\vc) =0$ for all $q\ge C P$.

Applying Theorem \ref{thm:cm} with $F(m\vx+\vk)$ and $w(P^{-1}(m\vx+\vk))$ in place of $F(\vx)$ and $w(P^{-1}(\vx))$ respectively, we get
\begin{equation}
N_{F,w}(P,m,\vk) = c_P P^{-2} \sum_\vc \sum_{q=1}^\infty q^{-4} S_q(\vc;m,\vk) I_q(\vc;m,\vk)
\label{eq:count}
\end{equation}
with
\begin{equation}
S_q(\vc;m,\vk) = \sideset{}{{}^*}\sum_{a \md{q}} \sum_{\vb \md{q}} e_q(aF(m\vb+\vk)+\vc\cdot\vb) 
\label{eq:sq_new}
\end{equation}
and
\[ I_q(\vc;m,\vk) = \int w(P^{-1}(m\vx+\vk)) h(P^{-1}q,P^{-2}F(m\vx+\vk)) e_q(-\vc\cdot\vx) d\vx.\]
Substitute $\vy = m\vx + \vk$ in the above integral to get
\begin{equation}
  I_q(\vc;m,\vk) = m^{-4} e_{mq}(\vc\cdot \vk) I_q(\vc/m),
\label{eq:Iq_new}
\end{equation}
where $I_q(\vc)$ is defined as in \eqref{eq:iq}. 

The fact that $\vc\in \Z^4$ rather than just $\R^4$ isn't needed to prove any of the results from \cite{heathbrown96} concerning $I_q(\vc)$, so all of the following results apply with $\frac{1}{m}\vc$ in place of $\vc$.  We will need these bounds when we combine everything in Section \ref{sec:proof}.
\begin{lem}[{\cite[Lemma 13]{heathbrown96}} ]
For all $N>0$ and $q\ll P$ it holds that
\[ I_q(\vnil) = P^4\big(\sigma_\infty(F,w)+O_{F,w,N}((q/P)^N)\big),\]
where $\sigma_\infty(F,w)$ is as in \eqref{eq:sigmainfty}.
\label{hb:13}
\end{lem}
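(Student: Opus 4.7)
The plan is to reduce $I_q(\vnil)$ to a one-dimensional integral against $h(q/P,\cdot)$ and then exploit the fact that, for fixed small first argument, $h(x,\cdot)$ behaves like an approximation to the Dirac delta at the origin, with rapidly decaying error as $x\to 0$.

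First I would homogenize and rescale. Substituting $\vx = P\vy$ in \eqref{eq:iq} with $\vc=\vnil$ and using $F(P\vy)=P^{2}F(\vy)$ gives
\[
I_q(\vnil) \;=\; P^{4}\int_{\R^{4}} w(\vy)\,h\!\left(\tfrac{q}{P},\,F(\vy)\right) d\vy.
\]
Since $\nabla F\neq\vnil$ on the closure of $\supp(w)$, the map $\vy\mapsto F(\vy)$ is a submersion on a neighbourhood of $\supp(w)$, and the coarea formula converts the $4$-dimensional integral into an integral over level sets:
\[
\int_{\R^{4}} w(\vy)\,h\!\left(\tfrac{q}{P},F(\vy)\right)d\vy \;=\; \int_{\R} h\!\left(\tfrac{q}{P},t\right) \phi(t)\,dt,
\]
where
\[
\phi(t) \;=\; \int_{F(\vy)=t} \frac{w(\vy)}{|\nabla F(\vy)|}\,dS_{\vy}.
\]
The function $\phi$ is smooth in $t$ on a neighbourhood of $0$ and of compact support, with bounds on its derivatives depending only on $F$ and $w$, and a short computation (or comparison with \eqref{eq:sigmainfty}) yields $\phi(0) = \sigma_\infty(F,w)$.

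Next I would quote the delta-approximation property of the DFI kernel $h$: for any smooth compactly supported $\phi$ and any $N>0$,
\[
\int_{\R} h(x,t)\,\phi(t)\,dt \;=\; \phi(0) \;+\; O_{N,\phi}\!\left(x^{N}\right),
\]
uniformly for $x$ in a bounded range. This follows from the decomposition of $h$ given in \cite{dfi,heathbrown96}: $h(x,\cdot)$ is concentrated in a window of width $O(x)$ about $t=0$ with integral $1+O_N(x^N)$, and its moments against smooth test functions can be estimated by repeated integration by parts using the rapid decay of the components of $h$. Applying this with $x=q/P$ and $\phi$ as above (and noting that the implicit constant depends only on finitely many derivatives of $w$ and on $F$) yields
\[
I_q(\vnil) \;=\; P^{4}\bigl(\sigma_\infty(F,w) + O_{F,w,N}((q/P)^{N})\bigr),
\]
as required.

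The main obstacle is the last step: the delta-approximation property of $h$ requires careful bookkeeping of how the error depends on derivatives of $\phi$, which in turn depend on $w$, $F$, and the lower bound for $|\nabla F|$ on $\supp(w)$. However this is precisely the content of the relevant auxiliary lemmas in \cite{heathbrown96}, so the proof reduces to citing them and checking that $\phi$ has the regularity they require, which is guaranteed by the standing hypothesis that $\nabla F\neq\vnil$ on $\overline{\supp(w)}$.
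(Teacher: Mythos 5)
Your proposal is correct and is essentially the argument behind the result being quoted: the paper gives no proof of this lemma, simply citing \cite[Lemma 13]{heathbrown96}, and Heath-Brown's proof proceeds exactly as you describe — rescale $\vx=P\vy$, disintegrate the integral over level sets of $F$ (valid since $\nabla F\neq\vnil$ on $\overline{\supp(w)}$, which also gives $\phi(0)=\sigma_\infty(F,w)$), and invoke the delta-approximation property of $h$ with its $O_N(x^N)$ error. Nothing further is needed.
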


\begin{lem}[{\cite[Lemma 16]{heathbrown96}}]\label{lem:hb16}
It holds that
\[ \frac{\partial^j I_q(\vnil)}{\partial q^j}\ll_{F,w}P^4q^{-j}\]
for $j=0,1$.
\end{lem}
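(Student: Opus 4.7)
My plan is to establish the bound by reducing to a one-dimensional integral along level sets of $F$ and then applying standard pointwise estimates on $h$ and $\partial_1 h$ coming from the Duke--Friedlander--Iwaniec construction.

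First I would rescale, substituting $\vx = P\vy$ in the defining integral \eqref{eq:iq}. Using the degree-$2$ homogeneity of $F$, this gives
\[ I_q(\vnil) = P^4 \int_{\R^4} w(\vy)\, h\!\left(\tfrac{q}{P}, F(\vy)\right) d\vy. \]
Because $w$ has compact support and $\nabla F$ is nonzero on $\mathrm{supp}(w)$, the coarea formula lets me slice by level sets of $F$: setting
\[ W(t) = \int_{F(\vy)=t} \frac{w(\vy)}{|\nabla F(\vy)|}\, d\sigma_t, \]
the function $W$ is smooth with compact support in $t$, and
\[ I_q(\vnil) = P^4 \int_\R W(t)\, h\!\left(\tfrac{q}{P}, t\right) dt. \]

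Next I would invoke the estimates on $h(x,y)$ and its partial derivatives established in \cite{dfi} and used throughout \cite{heathbrown96}. The key facts are: $h(x,y)$ is essentially supported in $|y| \ll \max(x, x^2)$, and on this support one has bounds of the shape $h(x,y) \ll x^{-1}$ and $\partial_x^{i} \partial_y^{j} h(x,y) \ll x^{-1-i-j}$ (with rapid decay when $|y|/x$ is large). Combined with the fact that $W$ is bounded, these give
\[ \int_\R |h(q/P, t)| \, dt \ll 1, \qquad \int_\R |\partial_1 h(q/P, t)|\, dt \ll (q/P)^{-1}. \]
The first bound immediately yields the $j=0$ case, $I_q(\vnil) \ll_{F,w} P^4$.

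For $j=1$, differentiating under the integral sign and using the chain rule gives
\[ \frac{\partial I_q(\vnil)}{\partial q} = P^3 \int_\R W(t)\, \partial_1 h(q/P, t)\, dt, \]
and the second bound above produces $\partial_q I_q(\vnil) \ll_{F,w} P^3 \cdot (P/q) = P^4/q$, as required. The main technical obstacle is really a bookkeeping one: verifying that one can quote the pointwise bounds on $h$ and $\partial_1 h$ from \cite{heathbrown96} in exactly the form needed, and checking that the implicit constants in the coarea decomposition depend only on $F$ and $w$ (which is guaranteed by compact support together with the non-vanishing of $\nabla F$).
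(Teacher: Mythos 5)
Your argument is correct and is essentially the proof of \cite[Lemma 16]{heathbrown96}; the present paper states this lemma as a citation and gives no proof of its own. Your three steps --- rescaling $\vx=P\vy$, slicing by level sets of $F$ (legitimate because $\nabla F\ne\vnil$ on the closure of $\supp(w)$, so $W(t)$ is smooth and compactly supported), and integrating the pointwise bounds on $h$ and $\partial_1 h$ in $t$ over the compact range of $F$ on $\supp(w)$ --- are exactly how the bound is obtained there; the only caveat is that the bounds on $h$ should be quoted in the precise form $x^{m+1}\partial_x^m h(x,y)\ll_{m,N} x^{N}+\min\{1,(x/|y|)^{N}\}$ of \cite[Lemma 9]{heathbrown96}, from which your integrated estimates $\int|h(x,t)|\,dt\ll 1$ and $\int|\partial_1 h(x,t)|\,dt\ll x^{-1}$ over a fixed compact $t$-range do follow (and one may assume $q\ll P$ throughout, since $I_q(\vnil)=0$ otherwise).
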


\begin{lem}[{\cite[Lemma 19]{heathbrown96}}]
For $\vc\ne \vnil$ we have
\[I_q(\vc)\ll_{F,w,N} P^{5} q^{-1} |\vc|^{-N}\]
for any $N>0$.
\label{hb:19}
\end{lem}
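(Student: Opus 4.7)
The plan is to obtain the claimed decay in $|\vc|$ by repeated integration by parts in the oscillating factor $e_q(-\vc\cdot\vx)$ appearing in \eqref{eq:iq}. Since $\vc\ne\vnil$, first choose a coordinate $i\in\{1,2,3,4\}$ with $|c_i|=|\vc|$. Because $\partial_{x_i}e_q(-\vc\cdot\vx) = -\frac{2\pi i c_i}{q}e_q(-\vc\cdot\vx)$ and $w$ is compactly supported (so boundary terms vanish), $N$-fold integration by parts in $x_i$ gives
\[
I_q(\vc) = \left(\frac{-q}{2\pi i c_i}\right)^N \int_{\R^4} \partial_{x_i}^N\Big[w(P^{-1}\vx)\,h(q/P,F(\vx)/P^2)\Big]\,e_q(-\vc\cdot\vx)\,d\vx,
\]
and the game reduces to bounding the new integrand efficiently.

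Next, I would estimate the $N$th derivative of the integrand by Leibniz together with Faà di Bruno. For $w$, compact support gives $|\partial_{x_i}^a w(P^{-1}\vx)|\ll_a P^{-a}$. For $h$, I would invoke the Heath-Brown / DFI bounds $|\partial_y^j h(q/P,y)|\ll_j (P/q)^{1+j}$ on the relevant range. Since $F$ is quadratic, on $\supp w(P^{-1}\cdot)$ one has $|\partial_i F(\vx)|\ll P$ and $|\partial_i^2 F(\vx)|\ll 1$, with all higher derivatives vanishing. A single application of $\partial_{x_i}$ to $h(q/P,F(\vx)/P^2)$ therefore contributes a factor bounded by $P^{-2}\cdot(P/q)^2\cdot P = P/q^2$, i.e.\ a factor $q^{-1}$ worse than the trivial bound $|h|\ll P/q$. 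Iterating, the dominant contribution to $\partial_{x_i}^N[wh]$ comes from placing every derivative on $h$ (each as a first $y$-derivative producing a fresh $\partial_i F$), yielding the bound $|\partial_{x_i}^N[wh]|\ll_N (P/q)\,q^{-N}$ uniformly on the support.

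Combining these ingredients with the fact that $w(P^{-1}\vx)$ is supported on a region of volume $\ll P^4$,
\[
|I_q(\vc)| \ll \left(\frac{q}{|\vc|}\right)^N \cdot P^4 \cdot \frac{P}{q} \cdot q^{-N} = \frac{P^5}{q}\,|\vc|^{-N},
\]
which is the claimed estimate.

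The main obstacle is the careful bookkeeping when differentiating the composition $h(q/P,F(\vx)/P^2)$: one must use the DFI bounds on $\partial_y^j h$ and then verify via Faà di Bruno that concentrating all derivatives on $h$ (rather than spreading them, which would force higher derivatives of $F$ and waste the available $P/q$-scale) indeed saturates the estimate. This is exactly the calculation Heath-Brown carries out in the proof of his Lemma 19, and it explains why only one power of $q^{-1}$ (rather than more) survives in the final bound.
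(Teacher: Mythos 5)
Your argument is correct and is essentially the proof Heath--Brown gives for his Lemma~19 (this paper only cites that result without reproving it): repeated integration by parts in the coordinate realizing $|\vc|$, combined with the DFI/Heath--Brown derivative bounds $\partial_y^j h(x,y)\ll_j x^{-1-j}$ and the observation that each $x_i$-derivative of $h(q/P,F(\vx)/P^2)$ costs only a factor $q^{-1}$ because $\nabla F\ll P$ on $\supp w(P^{-1}\cdot)$. The bookkeeping (dominant Fa\`a di Bruno term being all first $y$-derivatives, trivial volume factor $P^4$, vanishing boundary terms) is exactly right.
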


\begin{lem}[{\cite[Lemma 22]{heathbrown96}}]
For $j=0,1$ it holds that
\[ \frac{\partial^j I_q(\vc)}{\partial q^j} \ll_{F,w,\eps} P^{3+\eps} q^{1-j}|\vc|^{-1}.\]
\label{hb:22}
\end{lem}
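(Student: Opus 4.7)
The plan is to exploit the oscillation of the exponential $e_q(-\vc\cdot\vx)$ via a single integration by parts in the coordinate where $\vc$ is largest. This is essentially the one-step version of the iterated integration by parts behind Lemma~\ref{hb:19}: we give up the flexibility of iterating (and hence the decay $|\vc|^{-N}$) in exchange for a cleaner dependence on $P$ and $q$ that is useful precisely in the complementary regime where $|\vc|$ is small and $q$ is small.

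First I would rescale by setting $\vy = P^{-1}\vx$. Using $F(P\vy) = P^2 F(\vy)$ this gives
\[ I_q(\vc) = P^4 \int_{\R^4} w(\vy)\, h\!\left(\frac{q}{P}, F(\vy)\right) e_q(-P\vc\cdot\vy)\, d\vy.\]
Choose an index $i$ with $|c_i| = |\vc|$. Writing $e_q(-P\vc\cdot\vy) = \frac{q}{-2\pi i P c_i}\partial_{y_i} e_q(-P\vc\cdot\vy)$ and integrating by parts in $y_i$, the boundary term vanishes since $w$ has compact support, giving
\[ I_q(\vc) = \frac{q P^3}{-2\pi i\, c_i}\int \partial_{y_i}\!\left[ w(\vy)\, h\!\left(\tfrac{q}{P},F(\vy)\right) \right] e_q(-P\vc\cdot\vy)\, d\vy.\]
So it suffices to show that the remaining integral is $O_{F,w,\eps}(P^\eps)$.

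The derivative of the amplitude splits into the two pieces $(\partial_{y_i} w)\, h(q/P, F(\vy))$ and $w(\vy)\, \partial_{y_i} F(\vy)\, h_y(q/P, F(\vy))$. For both, I would perform the change of variables $t = F(\vy)$ along the integral curves of $\nabla F$, which is licit since by assumption $\nabla F \ne \vnil$ on $\supp(w)$, and reduce to one-dimensional integrals of the shape $\int |h(q/P, t)|\, dt$ and $\int |h_y(q/P, t)|\, dt$. Both are $O_\eps(P^\eps)$ by Heath-Brown's pointwise and support estimates on $h$ and its $y$-derivative; the logarithmic loss in those estimates is what forces the $P^\eps$.

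For the $q$-derivative case $j = 1$ I would repeat the argument, noting that differentiating $h(q/P, F(\vy))$ in $q$ produces an additional factor $P^{-1} h_x(q/P, F(\vy))$, where $h_x = \partial_x h$. Heath-Brown's bound on $h_x$ is one factor of $P/q$ stronger than the bound on $h$, so the net effect is a gain of $q^{-1}$, yielding the claimed $P^{3+\eps}/|\vc|$. \textbf{The main obstacle} is the $L^1$ control of $h_y(q/P, F(\vy))$ as a function of $\vy$: this is where the technical assumption $\nabla F \ne \vnil$ on $\supp(w)$ is essential, since it is what enables the four-dimensional integral to be collapsed to a one-dimensional integral along the level sets of $F$, where the size estimates on $h$ can be applied directly.
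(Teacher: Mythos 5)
The paper itself offers no proof of this lemma---it is imported verbatim from Heath-Brown---so the comparison is with his argument. Your opening move (one integration by parts in the coordinate where $|c_i|=|\vc|$, extracting the factor $qP^{3}/|\vc|$) is the right way to produce the $|\vc|^{-1}$, but the step you yourself flag as ``the main obstacle'' is where the argument genuinely breaks: the claim that $\int |h_y(q/P,t)|\,dt = O_\eps(P^\eps)$ is false. The function $h(x,\cdot)$ is an approximate identity at scale $x$: it has size $\asymp x^{-1}$ on an interval of length $\asymp x$ (with rapid decay outside), which is why $\int|h(x,t)|\,dt\ll x^{-\eps}$ holds; but its $y$-derivative has size $\asymp x^{-2}$ on that same interval, so $\int|h_y(x,t)|\,dt\asymp x^{-1}=P/q$. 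Feeding that into your computation gives only $I_q(\vc)\ll P^{4+\eps}|\vc|^{-1}$, i.e.\ you lose the entire factor $q/P$ that the lemma asserts, and that factor is exactly what makes the lemma useful for small $q$.

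The missing idea is that one must exploit cancellation in the $t$-integral rather than take absolute values of $h_y$. Heath-Brown's route avoids $h_y$ altogether: he first applies the coarea formula (legitimate since $\nabla F\ne\vnil$ on $\supp(w)$) to write $I_q(\vc)=P^4\int h(q/P,t)\,p_t(v)\,dt$ with $v=P\vc/q$, where $p_t(v)$ is an oscillatory integral over the level surface $\{F=t\}$; he then proves the uniform bounds $p_t(v),\ \partial_t p_t(v)\ll |v|^{-1}$ by integrating by parts \emph{on the surface}, and only afterwards uses $\int|h(q/P,t)|\,dt\ll (P/q)^{\eps}$. Note that the naive repair of your argument---integrating by parts once more in $t$ to move the derivative off $h$---fails, because the $t$-derivative then hits the oscillating factor and returns a factor $|v|=P|\vc|/q$ that cancels the $|\vc|^{-1}$ you just won; one really needs the uniform-in-$t$ bound on $\partial_t p_t(v)$ that retains the $|v|^{-1}$ saving. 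Your treatment of the $j=1$ case (chain rule giving $P^{-1}h_x$, with $h_x\ll x^{-1}h$-type losses netting a factor $q^{-1}$) is fine once the $j=0$ case is established correctly.
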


The main work needed to establish a result is to deal with the sums $S_q(\vc;m,\vk)$, which is what we do in the next section.

\section{The sum $S_q(\vc;m,\vk)$}\label{sec:sq}

We seek to prove an asymptotic for the sum
\begin{equation}
  S(X,\vc,m,\vk) = \sum_{q\le X} S_q(\vc;m,\vk) e_{mq}(\vc\cdot \vk),
\label{eq:S}
\end{equation}
where the factor $e_{mq}(\vc\cdot\vk)$ is included because of \eqref{eq:Iq_new}.


For a fixed value of $q$ write $q=uv$, where $(m,u)=1$ and $v|m^\infty$. Let $a = v a_u +u  a_v$ and $\vb = u\ol{u}^{(v)} \vb_v + v\ol{v}^{(u)}\vb_u$ in \eqref{eq:sq_new} to get that
\begin{equation}
S_{uv}(\vc;m,\vk) = S_u(\ol{v}^{(u)}\vc;m,\vk) S_v(\ol{u}^{(v)}\vc;m,\vk).
\label{eq:suv}
\end{equation}
Note that if $\vc = \vnil$ this simplifies to $S_q(\vnil;m,\vk)=S_u(\vnil;m,\vk) S_v(\vnil;m,\vk)$. In other words, $S_q(\vnil;m,\vk)$ is multiplicative in $q$, which will make this case much easier than the general case. 
\subsection*{The case $\vc=\vnil$}
As noted above $S_q(\vnil;m,\vk)$ is multiplicative as a function of $q$, and so we can use the the exact same methods as in \cite{heathbrown96} to understand the sum $\sum_{q\le X}q^{-4} S_q(\vnil;m,\vk)$. The resulting main term will involve the quantity 
\begin{equation}
\sigma_p(m,\vk) \coloneqq \sum_{s\ge 0} p^{-4s} S_{p^s}(\vnil;m,\vk).
\label{eq:sigma-new}
\end{equation}
In much the same way as one can show that the usual $p$-adic density
\begin{equation}
\sigma_p = \sum_{s\ge 0} p^{-4s} S_{p^s}(\vnil)
\label{eq:sigmap}
\end{equation}
satisfies
\[\sigma_p = \lim_{\nu \to \infty} p^{-3\nu} \#\{\vx\in (\Z/p^\nu\Z)^4: F(\vx) \equiv 0\md{p^{\nu}}\},\]
one can show that
\begin{equation}
\sigma_p(m,\vk) = \lim_{\nu\to \infty} p^{-3\nu} \#\{\vx \in (\Z/p^{\nu}\Z)^4: F(m\vx+\vk)\equiv 0\md{p^\nu}\}.
\label{eq:sigmap-interp}
\end{equation}
If $(p,m)=1$ it is clear that $\sigma_p(m,\vk) = \sigma_p$, for example one can substitute $m\vx+\vk\mapsto \vx$ in the above. 

We remark that for any $d$ satisfying $(d,m)=1$,
\begin{equation}
\sigma_p(m,d\vk)=\sigma_p(m,\vk),
\label{eq:sigmap-kdep}
\end{equation}
and so in reality $\sigma_p(m,\vk)$ only depends on the projective vector $\vk$.

\begin{lem}\label{lem:c0}
If $\det M$ is a square then
\[ \sum_{q\le X} q^{-4} S_q(\vnil;m,\vk) = \sigma^*_\text{sq}(F,m,\vk)\log X +\sigma_1'(F,m,\vk)+O_{F,m,\eps}(X^{-1/2+\eps}),\]
where
\begin{equation}
\sigma^*_\text{sq}(F,m,\vk) = \big(\prod_{p\notdivides m} (1-p^{-1}) \sigma_p\big) \big(\prod_{p|m} (1-p^{-1})\sigma_p(m,\vk)\big),
\label{eq:sigma-star}
\end{equation}
$\sigma_p(m,\vk)$ and $\sigma_p$ are as in \eqref{eq:sigma-new} and \eqref{eq:sigmap} respectively
and $\sigma_1'(F,m,\vk)$ is some constant depending on $F$, $m$ and $\vk$ but not on $X$.

If $\det M$ is not a square then
\[ \sum_{q\le X} q^{-4} S_q(\vnil;m,\vk) = L(1,\chi_\Delta)\sigma^*(F,m,\vk) +O_{F,m,\eps}(X^{-1/2+\eps}),\]
where
\begin{equation}
\sigma^*(F,m,\vk) =\big(\prod_{p\notdivides m}(1-\chi_\Delta(p)p^{-1})\sigma_p\big)\big(\prod_{p|m}(1-\chi_\Delta(p)p^{-1})\sigma_p(m,\vk)\big)
\label{eq:sigma-star2}
\end{equation}
and $\sigma_p(m,\vk)$ and $\sigma_p$ are as above.

\end{lem}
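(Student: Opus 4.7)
The proof will exploit the multiplicativity given by \eqref{eq:suv}. Writing each $q$ uniquely as $q=uv$ with $(u,m)=1$ and $v\mid m^\infty$, one has $S_q(\vnil;m,\vk)=S_u(\vnil;m,\vk)S_v(\vnil;m,\vk)$, so
\[
\sum_{q\le X} q^{-4}S_q(\vnil;m,\vk)=\sum_{v\mid m^\infty} v^{-4}S_v(\vnil;m,\vk)\sum_{\substack{u\le X/v\\(u,m)=1}} u^{-4}S_u(\vnil;m,\vk).
\]
For $(u,m)=1$ the substitution $\vb\mapsto m\vb+\vk$ is a bijection on $(\Z/u\Z)^4$, so $S_u(\vnil;m,\vk)=S_u(\vnil)$ reduces to the unmodified Heath-Brown sum; and by multiplicativity together with \eqref{eq:sigma-new}, $\sum_{v\mid m^\infty} v^{-4}S_v(\vnil;m,\vk)=\prod_{p\mid m}\sigma_p(m,\vk)$, a finite constant independent of $X$.

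The plan for the inner sum is to mimic Heath-Brown's Dirichlet series argument. Setting
\[
T_m(s) = \sum_{(u,m)=1} u^{-(s+4)} S_u(\vnil) = \prod_{p\nmid m}\sum_{j\ge 0} p^{-j(s+4)} S_{p^j}(\vnil),
\]
one identifies this, following \cite{heathbrown96}, as a product $L^{(m)}(s+1,\chi_\Delta)E^{(m)}(s)$ (respectively $\zeta^{(m)}(s+1)E^{(m)}(s)$ in the square case), where the superscript $(m)$ indicates that Euler factors at primes $p\mid m$ have been removed and $E^{(m)}(s)$ is a Dirichlet series converging absolutely for $\Re s>-1/2$. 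Since only finitely many Euler factors have been deleted, the analytic structure — a simple pole at $s=0$ in the square case, analyticity with a known nonzero value there in the non-square case — is inherited from the unrestricted setting. Perron's formula with the same contour shift Heath-Brown uses then gives, for each fixed $v\le X$, the asymptotic
\[
\sum_{\substack{u\le X/v\\(u,m)=1}} u^{-4} S_u(\vnil) = L^{(m)}(1,\chi_\Delta)\prod_{p\nmid m}(1-\chi_\Delta(p)p^{-1})\sigma_p + O_{F,m,\eps}\bigl((X/v)^{-1/2+\eps}\bigr)
\]
in the non-square case, together with an analogous $\log(X/v)$-asymptotic in the square case.

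Multiplying by $\prod_{p\mid m}\sigma_p(m,\vk)$ and rewriting $L^{(m)}(1,\chi_\Delta)=L(1,\chi_\Delta)\prod_{p\mid m}(1-\chi_\Delta(p)p^{-1})$ (respectively absorbing the corresponding $\prod_{p\mid m}(1-p^{-1})$ into the residue of $\zeta^{(m)}(s+1)$ in the square case) yields precisely the expressions \eqref{eq:sigma-star} and \eqref{eq:sigma-star2}. The $\log X$ coefficient in the square case is extracted from $\log(X/v)=\log X-\log v$, with the $\log v$ contribution absorbed into the constant $\sigma_1'(F,m,\vk)$. Summing the inner error over $v\mid m^\infty$ requires the convergence of $\sum_{v\mid m^\infty} v^{-7/2+\eps}|S_v(\vnil;m,\vk)|$, which follows by applying the local bounds used in \cite{heathbrown96} at each of the finitely many primes $p\mid m$. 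The main obstacle I anticipate is the bookkeeping needed to verify that the removed and reinserted local Euler factors combine correctly into the neat products in \eqref{eq:sigma-star} and \eqref{eq:sigma-star2}; the actual analytic content is wholly inherited from \cite{heathbrown96}.
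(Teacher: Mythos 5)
Your proposal is correct and takes essentially the same route as the paper, whose proof simply notes that $S_q(\vnil;m,\vk)$ is multiplicative in $q$ and defers to \cite[Lemma 31]{heathbrown96} with ``the details otherwise almost exactly the same.'' Your explicit $q=uv$ factorisation, the reduction $S_u(\vnil;m,\vk)=S_u(\vnil)$ for $(u,m)=1$, and the Euler-factor bookkeeping that recombines $\prod_{p\mid m}\sigma_p(m,\vk)$ with the deleted local factors of $\zeta$ or $L(\cdot,\chi_\Delta)$ are precisely the omitted details, and they check out.
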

\begin{proof}
See \cite[Lemma 31]{heathbrown96}, which gives a similar statement with $S_q(\vnil)$ in place of $S_q(\vnil;m,\vk)$. The details are otherwise almost exactly the same, so we do not repeat them here.
\end{proof}

\subsection*{The case $\vc\ne\vnil$}
If $(u,m)=1$ it holds that
\[
  S_u(\ol{v}^{(u)}\vc;m,\vk) = e_u(-\ol{mv}^{(u)} \vc\cdot\vk) S_u(\ol{mv}^{(u)}\vc) = e_u(-\ol{mv}^{(u)}\vc\cdot\vk)S_u(\vc),
\]
by the substitution $\vb' = m\vb + \vk$ in \eqref{eq:sq_new} and the fact that $S_q(k\vc)=S_q(\vc)$ for $(q,k)=1$. Using this and \eqref{eq:suv} gives
\[
e_{uv}(\vc\cdot\vk/m)S_{uv}(\vc;m,\vk) = e_{mv}(\ol{u}^{(mv)}\vc\cdot\vk) S_u(\vc) S_v(\ol{u}^{(v)}\vc;m,\vk),
\]
where we also use that
\[  \ol{a}^{(b)} + \ol{b}^{(a)} \equiv 1 \md{ab} \]
to write $e_{muv}(\vc\cdot\vk) e_u(-\ol{mv}^{(u)}\vc\cdot\vk) = e_{mv}(\ol{u}^{(mv)}\vc\cdot\vk)$.

We will use this to write the sum over $q$ in \eqref{eq:S} as a sum over $u$ and $v$, where $(u,m)=1$ and $v|m^\infty$. We then split the sum into two parts depending on the size of $v$, namely $S(X,\vc,m,\vk) = S_1+S_2$ with
\[S_1 = \sum_{\substack{v|m^\infty\\v\le X^{1/3}}} \sum_{u\le X/v} e_{mv}(\ol{u}^{(mv)} \vc\cdot\vk) S_u(\vc) S_v(\ol{u}^{(v)}\vc;m,\vk)\]
and
\[S_2 = \sum_{\substack{v|m^\infty\\ X^{1/3}<v\le X}} \sum_{u\le X/v} e_{mv}(\ol{u}^{(mv)} \vc\cdot\vk) S_u(\vc) S_v(\ol{u}^{(v)}\vc;m,\vk).\]
For $S_2$ we use the following basic bound.
\begin{lem}\label{lem:sq-bound}
It holds that
\[ S_q(\vc,m,\vk) \ll_{m,\Delta} q^{3}.\]
\end{lem}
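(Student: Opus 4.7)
The plan is to treat $S_q(\vc; m, \vk)$ as a quadratic exponential sum in four variables and bound it by the standard ``squaring out'' argument, obtaining the trivial Gauss sum bound $|T_a| \ll_{m,\Delta} q^2$ for each inner $\vb$-sum $T_a$; summing over the $\le q$ values of $a$ then yields the claim. The key observation is that although the phase $aF(m\vb + \vk)$ has ``bad'' quadratic part $am^2 F(\vb)$ whose leading coefficient need not be coprime to $q$, the resulting Gauss sum is still controlled because the associated symmetric matrix $2am^2 M$ has nonzero integer determinant, so its kernel modulo $q$ is bounded uniformly in $q$ by a constant depending only on $m$ and $\det M$.

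First, expand
\[
F(m\vb + \vk) = m^2 \vb^T M \vb + 2m \vk^T M \vb + F(\vk),
\]
so that
\[
S_q(\vc; m, \vk) = \starsum{a\md{q}} e_q(aF(\vk))\, T_a, \qquad T_a := \sum_{\vb \md q} e_q\bigl(am^2\, \vb^T M \vb + \vd_a^T \vb\bigr),
\]
with $\vd_a = 2amM\vk + \vc$. I would then compute
\[
|T_a|^2 = \sum_{\vb, \vh \md q} e_q\bigl(am^2(\vb+\vh)^T M (\vb+\vh) - am^2 \vb^T M \vb + \vd_a^T \vh\bigr),
\]
and, after expanding and collecting the $\vb$-dependent part as a character sum $\sum_\vb e_q(2am^2 \vh^T M \vb)$, obtain the usual identity
\[
|T_a|^2 = q^4 \cdot \#\bigl\{\vh \md q : 2am^2 M \vh \equiv \vnil \md q\bigr\}.
\]

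Since $(a,q) = 1$, the condition $2am^2 M \vh \equiv \vnil \md q$ is equivalent to $2m^2 M \vh \equiv \vnil \md q$. Here $2m^2 M$ is a fixed integer matrix with nonzero determinant $16 m^8 \det M$, so by reducing to the Smith normal form one sees that
\[
\#\bigl\{\vh \md q : 2m^2 M \vh \equiv \vnil \md q\bigr\} \le |\det(2m^2 M)| = 16 m^8 |\det M|,
\]
a quantity depending only on $m$ and (the prime factors and powers inside $\det M$, hence) $\Delta$. This gives $|T_a| \ll_{m,\Delta} q^2$ uniformly in $a$ and $\vc$, and so
\[
|S_q(\vc; m, \vk)| \le \sum_{a \md q}^* |T_a| \ll_{m,\Delta} q \cdot q^2 = q^3,
\]
as required.

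There is no real obstacle; the only mild subtlety is that when primes dividing $m$ or $\det M$ appear in $q$ to a large power, the matrix $2am^2 M$ is highly degenerate mod $q$, but this is exactly compensated by the uniform kernel bound coming from $\det M \ne 0$, and it is the reason the constant in the estimate must be allowed to depend on $m$ and $\Delta$.
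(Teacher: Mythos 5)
Your proof is correct and is essentially the argument behind \cite[Lemma 25]{heathbrown96}, to which the paper's proof simply defers: bound the complete quadratic exponential sum over $\vb$ by $O_{m,F}(q^{2})$ via the squaring argument, using $\det M\ne 0$ (through the Smith normal form of $2m^{2}M$) to control the kernel modulo $q$ uniformly, and then sum trivially over the at most $q$ values of $a$. One small slip: after summing out $\vb$, each surviving $\mathbf{h}$ still carries the phase $e_q(am^{2}\mathbf{h}^{T}M\mathbf{h}+\mathbf{d}_a^{T}\mathbf{h})$, so your displayed identity should read $|T_a|^{2}\le q^{4}\,\#\{\mathbf{h}\md{q}:2am^{2}M\mathbf{h}\equiv\vnil\md{q}\}$ rather than an equality (and the constant really depends on $m$ and $\det M$, i.e.\ on the fixed form $F$, rather than on $\Delta$ alone) --- neither point affects the conclusion.
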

\begin{proof}
This is essentially {\cite[Lemma 25]{heathbrown96}}, which establishes the bound for $S_q(\vc)$, but pretty much the exact same proof goes through for $S_q(\vc,m,\vk)$.
\end{proof}
Taking absolute values and using the lemma gives
\[|S_2|\ll_{m,\Delta} \!\!\!\!\!\!\!\sum_{\substack{v|m^\infty\\ X^{1/3}<v\le X}}\!\!\! \sum_{u\le X/v} u^3v^3 \le \!\!\!\!\!\!\! \sum_{\substack{v|m^\infty\\ X^{1/3}<v\le X}}\!\!\! \sum_{u\le X/v} X^3 \le \sum_{\substack{v|m^\infty\\ v\le X}} \sum_{u\le X^{2/3}} X^3\ll_{\eps,m} X^{11/3+\eps}.\]

For a function $f$ supported only on integers coprime to $mv$ with period $mv$ one can decompose
\[ f(u) = \sum_{\chi \md{mv}} a_\chi \chi(u),\]
where
\[a_\chi = \frac{1}{\phi(mv)}\sum_{x\md{mv}} f(x) \ol{\chi(x)}.\]
Using this in $S_1$ we get
\[ S_1 = \sum_{v|m^\infty, v\le X^{1/3}} \sum_{\chi \md{mv}} a_\chi(v,\vc,m,\vk)\sum_{u\le X/v} \chi(u) S_u(\vc) \]
with
\begin{equation}
  a_\chi(v,\vc,m,\vk) = \frac{1}{\phi(mv)}\sum_{x\md{mv}} \chi(x) e_{mv}(x\vc\cdot\vk)S_v(x\vc;m,\vk).
\label{eq:achi}
\end{equation}
Dealing with the innermost sum over $u$ will be done nearly identically as in \cite{heathbrown96}, the only difference is the twist by $\chi(u)$.

\begin{lem} Let $\chi$ be a Dirichlet character of conductor $mv$ for some $v|m^\infty$. Then for any $\eps>0$,
\[ \sum_{u\le X} \chi(u) S_u(\vc) = \eta(\vc)\xi(\chi)\sigma'(F,m)\frac{X^4}{4}+ O_\eps(X^{7/2+\eps}|\vc|^\eps), \]
where
\begin{equation}
\sigma'(F,m) =\big(\prod_{p\notdivides m} (1-p^{-1})\sigma_p'\big)\big(\prod_{p|m}(1-p^{-1})\big),
\label{eq:sigma-dash}
\end{equation}
\begin{equation}
\sigma_p' = \sum_{t\ge 0} p^{-4t} S_{p^t}(\vnil) \chi_\Delta(p)^t,
\label{eq:sigmap-dash}
\end{equation}
$\eta(\vc)=1$ if $M^{-1}(\vc)=0$ and $0$ otherwise, and $\xi(\chi)=1$ if $\chi = \chi_0 \chi_\Delta$ and $0$ otherwise. Recall that $\chi_0$ is the principal character modulo $m$.
\label{lem:ssum-twisted}
\end{lem}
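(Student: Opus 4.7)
The proof follows the Dirichlet-series / contour-shift pattern used by Heath-Brown in \cite[Lemmas 28, 29]{heathbrown96}, with the twist by $\chi$ entering only through the Euler product. First I would observe that $u\mapsto S_u(\vc)$ is multiplicative in $u$ for fixed $\vc$: the coprime factorisation \eqref{eq:suv} combined with the fact that $S_u(k\vc)=S_u(\vc)$ whenever $(u,k)=1$ yields $S_{u_1u_2}(\vc)=S_{u_1}(\vc)S_{u_2}(\vc)$ for $(u_1,u_2)=1$. Since $v|m^\infty$ we have $\chi(u)=0$ whenever $(u,m)>1$, so the Dirichlet series factorises as
\[ Z(s,\vc) \coloneqq \sum_{u=1}^\infty \chi(u) S_u(\vc) u^{-s} = \prod_{p\notdivides m}\sum_{t\ge 0}\chi(p)^t S_{p^t}(\vc) p^{-ts}. \]

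Next, using Heath-Brown's explicit evaluations of $S_{p^t}(\vc)$, the local factors at primes $p$ not dividing $2m\det M\cdot M^{-1}(\vc)$ sum to
\[ Z_p(s,\vc) = \bigl(1-\chi(p)\chi_\Delta(p) p^{3-s}\bigr)^{-1}\bigl(1+O(p^{5/2-\Re s})\bigr), \]
while at the finitely many excluded primes one has the crude bound $Z_p(s,\vc)\ll_m p^{O(1)}$. Assembling these local factors yields a decomposition
\[ Z(s,\vc) = L(s-3,\chi\chi_\Delta)\,G(s,\vc), \]
where $G(s,\vc)$ extends holomorphically to $\Re s>7/2$ with polynomial growth in $|\Im s|$ and in $|\vc|^\eps$.

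Armed with this, the truncated Perron formula together with a convexity bound for $L(s-3,\chi\chi_\Delta)$ and a contour shift from $\Re s=4+\eps$ down to $\Re s=7/2+\eps$ gives
\[ \sum_{u\le X}\chi(u)S_u(\vc) = \operatorname*{Res}_{s=4}\frac{Z(s,\vc)X^s}{s} + O_\eps(X^{7/2+\eps}|\vc|^\eps). \]
The residue at $s=4$ is nonzero precisely when $L(s-3,\chi\chi_\Delta)$ has a pole there, i.e.\ when $\chi\chi_\Delta$ is induced from the trivial character; since $\chi$ has conductor $mv$, this is exactly the condition $\chi=\chi_0\chi_\Delta$ captured by $\xi(\chi)$. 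Evaluating, the $L$-pole contributes $\prod_{p|m}(1-p^{-1})$, and $G(4,\vc)$ produces $\eta(\vc)\prod_{p\notdivides m}(1-p^{-1})\sigma_p'$; the factor $\eta(\vc)$ emerges from the local analysis because when $M^{-1}(\vc)\neq 0$ the local factors at primes dividing $M^{-1}(\vc)$ contain a Gauss-sum-type cancellation that forces $G(4,\vc)=0$. The main obstacle is precisely this case-analysis of the bad-prime local factors together with the uniform polynomial control of $G(s,\vc)$ in $|\vc|$ and $|\Im s|$ required for the Perron step; by contrast, the twist by $\chi$ introduces no essentially new phenomena beyond the replacement of $\chi_\Delta$ by $\chi\chi_\Delta$ in the resulting $L$-function.
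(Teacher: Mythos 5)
Your overall strategy—multiplicativity of $S_u(\vc)$, an Euler-product factorisation of $\sum_u \chi(u)S_u(\vc)u^{-s}$ against $L(s-3,\chi\chi_\Delta)$, and Perron/contour-shift—is exactly the argument behind the Heath-Brown lemmas that the paper simply cites for this statement (Lemmas 28 and 30 of \cite{heathbrown96}, adapted to carry the twist $\chi$), and your identification of the pole condition $\chi=\chi_0\chi_\Delta$ and of the residue $\frac{X^4}{4}\sigma'(F,m)$ is correct in the case $M^{-1}(\vc)=0$.

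However, your treatment of the case $M^{-1}(\vc)\neq 0$ contains a genuine error. For a good prime $p$ (so $p\nmid 2m\det M$) one completes the square, $aF(\vb)+\vc\cdot\vb = aF(\vb+(2a)^{-1}M^{-1}\vc)-(4a)^{-1}M^{-1}(\vc)$, and the Gauss sum evaluation gives $S_{p^t}(\vc)=\chi_\Delta(p)^t p^{2t}c_{p^t}\bigl(M^{-1}(\vc)/4\bigr)$ with $c_{p^t}$ a Ramanujan sum. If $p\nmid M^{-1}(\vc)$ and $M^{-1}(\vc)\neq 0$, then $c_{p^t}=-1$ for $t=1$ and $0$ for $t\ge 2$, so the local factor is $Z_p(s)=1-\chi\chi_\Delta(p)p^{2-s}$, \emph{not} $\bigl(1-\chi\chi_\Delta(p)p^{3-s}\bigr)^{-1}\bigl(1+O(p^{5/2-\Re s})\bigr)$ as you assert; the discrepancy is of size $p^{3-\Re s}$, which is not absorbed by your error term. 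Consequently the decomposition $Z=L(s-3,\chi\chi_\Delta)G$ with $G$ holomorphic and polynomially bounded on $\Re s>7/2$ does not hold in this case (and even formally, $G=Z/L$ would acquire poles at zeros of $L$ in that strip). The factor $\eta(\vc)$ therefore does not arise from ``Gauss-sum cancellation at the primes dividing $M^{-1}(\vc)$'': those primes contribute only a finite Dirichlet polynomial of size $O_\eps(|\vc|^\eps)$ at $s=4$, which is generically nonzero. Rather, when $M^{-1}(\vc)\neq 0$ the Euler product over the \emph{generic} primes $p\nmid M^{-1}(\vc)$ converges absolutely for $\Re s>3$, so $Z(s,\vc)$ has no pole at $s=4$ at all and the main term vanishes for that reason. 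With this correction the conclusion of the lemma is unaffected, but the step as written would fail.
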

\begin{proof}
If $M^{-1}(\vc)\ne 0$ this is just \cite[Lemma 28]{heathbrown96} with the condition $|\vc|\le P$ removed and the error $P^\eps$ replaced by $|\vc|^\eps$.

If $M^{-1}(\vc)=0$ this is essentially the same as \cite[Lemma 30]{heathbrown96}. The only real difference is that if $\chi=\chi_0\chi_\Delta$ then one gets a main term even when $\det M$ is not a square. Note that in the main term we have used the fact that $S_{p^t}(\vc) = S_{p^t}(\vnil)$ whenever $M^{-1}(\vc)=0$ and $p\notdivides \Delta$. Indeed, 
\[aF(\vb)+\vc\cdot\vb = aF(\vb + (2a)^{-1}M^{-1}\vc) - (4a)^{-1}M^{-1}(\vc)\]
can be used in the definition of $S_{p^t}(\vc)$.
\end{proof}

Applying Lemma \ref{lem:ssum-twisted} we now have that $S_1$ is
\begin{equation*}
\sum_{v|m^\infty, v\le X^{1/3}} \sum_{\chi \md{mv}} a_\chi(v,\vc,m,\vk) \Big(\eta(\vc)\xi(\chi)\sigma'(F,m) \frac{X^4}{4v^4} + O_\eps( (X/v)^{7/2+\epsilon}|\vc|^\eps) \Big).
\end{equation*}
For the error term we use that 
\begin{equation}
|a_\chi(v,\vc,m,\vk)| \ll_{m,\Delta} v^3
\label{eq:achi-bound-triv}
\end{equation}
by \eqref{eq:achi} and the bound in Lemma \ref{lem:sq-bound}. The error is then bounded above by
\[ \sum_{v|m^\infty, v\le X^{1/3}} O_{m,\Delta,\eps} (v^{1/2-\eps}X^{7/2+\eps}|\vc|^\eps)\ll_{m,\Delta,\eps} X^{11/3+\eps}|\vc|^\eps,\]
where we have used that the number of values of $v$ we are summing over is of order $(\log X)^{\omega(m)}\ll_{m,\eps} X^\eps$, where $\omega(m)$ is the number of unique prime factors of $m$.

If $\eta(\vc)=1$ and $\Delta|m^\infty$ we might also get a main term. Assume therefore that $\Delta|m^\infty$, such that for at least some $v$ the character $\chi = \chi_0\chi_\Delta$ appears in the sum. The main term is then
\[\eta(\vc)\sigma'(F,m)\frac{X^4}{4}\sum_{\substack{v|m^\infty, v\le X^{1/3}\\ \Delta | mv}} a_{\chi_0\chi_\Delta}(v,\vc,m,\vk) v^{-4}. \]
Finally we extend the sum over $v$ to also include values with $v>X^{1/3}$. By \eqref{eq:achi-bound-triv} this introduces an error $O_{m,\Delta,\eps}(X^{11/3+\eps})$. Combining all of this we thus have
\[ S_1 = \frac{X^4}{4}\eta(\vc)\sigma'(F,m) A(\vc,m,\vk) + O_{m,\Delta,\eps}(X^{11/3+\eps}),\]
where we have defined
\begin{equation}
A(\vc,m,\vk) = \sum_{v|m^\infty, \Delta|mv} a_{\chi_0\chi_\Delta}(v,\vc,m,\vk) v^{-4}.
\label{eq:A}
\end{equation}
Together with the bound on $S_2$ we have thus proved the following.
\begin{lem}\label{lem:cnonzero}
For any $\eps>0$ and $\vc\ne \vnil$ it holds that
\[S(X,\vc,m,\vk) = \frac{X^4}{4}\eta(\vc)\sigma'(F,m)A(\vc,m,\vk) + O_{m,\Delta,\eps}(X^{11/3+\eps}|\vc|^\eps).\]
\end{lem}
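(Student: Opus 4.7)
The plan is to follow the decomposition already begun in the discussion preceding the lemma statement. For each $q \le X$ I would factor $q = uv$ with $(u,m)=1$ and $v \mid m^\infty$, use the identity
\[e_{uv}(\vc\cdot\vk/m)\,S_{uv}(\vc;m,\vk) = e_{mv}(\ol{u}^{(mv)}\vc\cdot\vk)\, S_u(\vc)\, S_v(\ol{u}^{(v)}\vc;m,\vk)\]
derived from \eqref{eq:suv} together with the substitution $\vb' = m\vb + \vk$ in \eqref{eq:sq_new}, and then split the resulting double sum as $S(X,\vc,m,\vk) = S_1 + S_2$ according to whether $v \le X^{1/3}$ or $X^{1/3} < v \le X$.

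First I would handle $S_2$ by applying the trivial bound $S_q(\vc;m,\vk) \ll_{m,\Delta} q^3$ from Lemma \ref{lem:sq-bound}. Since $v$ ranges only over divisors with $v \mid m^\infty$, there are at most $(\log X)^{\omega(m)} \ll_{m,\eps} X^\eps$ such values up to $X$, and the direct estimate sketched in the text yields $|S_2| \ll_{m,\Delta,\eps} X^{11/3+\eps}$, which is absorbed into the claimed error term.

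For $S_1$ I would expand the $u$-dependent factor over Dirichlet characters $\chi$ modulo $mv$, with coefficients $a_\chi(v,\vc,m,\vk)$ as in \eqref{eq:achi}. This reduces the inner sum in $u$ to $\sum_{u \le X/v} \chi(u) S_u(\vc)$, to which Lemma \ref{lem:ssum-twisted} applies. The main term survives only when both $\eta(\vc)=1$ and $\chi = \chi_0\chi_\Delta$; note that the latter condition forces $\Delta \mid mv$. For the error arising from the non-main terms I would invoke the trivial estimate $|a_\chi(v,\vc,m,\vk)| \ll_{m,\Delta} v^3$ together with the bound $O_\eps((X/v)^{7/2+\eps}|\vc|^\eps)$ from Lemma \ref{lem:ssum-twisted}; summing over $v \le X^{1/3}$ and again using the bound on the number of admissible $v$ produces an overall error of $O_{m,\Delta,\eps}(X^{11/3+\eps}|\vc|^\eps)$. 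Finally I would extend the sum over $v$ back to all $v \mid m^\infty$ with $\Delta \mid mv$, paying only $O_{m,\Delta,\eps}(X^{11/3+\eps})$ by the trivial bound on $a_\chi$; the resulting coefficient of $\tfrac{X^4}{4}\eta(\vc)\sigma'(F,m)$ is precisely $A(\vc,m,\vk)$ from \eqref{eq:A}.

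The main obstacle here is not conceptual but bookkeeping: one must verify carefully that the phase $e_{uv}(\vc\cdot\vk/m)$ combines cleanly with the $u$-part via the identity $\ol{a}^{(b)} + \ol{b}^{(a)} \equiv 1 \md{ab}$, and that extending the $v$-sum to its full range contributes to the error only, not to the main term beyond what is already captured by $A(\vc,m,\vk)$. With these points verified, the result follows immediately by combining the estimates for $S_1$ and $S_2$.
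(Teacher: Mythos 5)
Your proposal is correct and follows essentially the same route as the paper: the $u$--$v$ factorisation with the phase identity, the split at $v=X^{1/3}$ with the trivial bound on $S_2$, the character decomposition of the $u$-dependence, Lemma \ref{lem:ssum-twisted} for the inner sum, and the extension of the $v$-sum to recover $A(\vc,m,\vk)$ all match the argument given in Section \ref{sec:sq}. No gaps to report.
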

\begin{rem}
One can in fact evaluate $a_\chi(v,\vc,m,\vk)$ by factoring $v$ and $m$ into separate prime factors and using standard results on Gauss sums. However, the resulting expressions don't seem very enlightening, and the dependence on $\vc$ is particularly nasty. In proving our main theorem we will eventually need to sum an expression involving $\eta(\vc)A(\vc,m,\vk)$ over $\vc\ne\vnil$, and here we were not able to use the closed form of $a_\chi(v,\vc,m,\vk)$ and $A(\vc,m,\vk)$ in any meaningful sense. Because computing a closed form expression is a fairly long and technical computation we thus omit it here.
\end{rem}

Finally, we record the following transformation property for $A(\vc,m,\vk)$.
\begin{lem}\label{lem:A-trans}
Let $(d,m)=1$. Then
\[ A(\vc,m,d\vk) = \chi_\Delta(d) A(\vc,m,\vk).\]
\end{lem}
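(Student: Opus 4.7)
The plan is to prove the identity $v$ by $v$: since $A(\vc,m,\vk)$ is defined in \eqref{eq:A} as a weighted sum of $a_{\chi_0\chi_\Delta}(v,\vc,m,\vk)$ over a set of $v$ independent of $\vk$, it suffices to show $a_{\chi_0\chi_\Delta}(v,\vc,m,d\vk) = \chi_\Delta(d)\,a_{\chi_0\chi_\Delta}(v,\vc,m,\vk)$ for each $v\mid m^\infty$ with $\Delta\mid mv$. A preliminary observation that will be crucial: because $v\mid m^\infty$, the admissibility condition $\Delta\mid mv$ forces every prime divisor of $\Delta$ to divide $m$. Combined with $(d,m)=1$ this gives $(d,\Delta)=1$, so $\chi_\Delta(d)\in\{\pm 1\}$ is well-defined.

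The first step is to show $S_v(x\vc;m,d\vk) = S_v(xd\vc;m,\vk)$. In the definition \eqref{eq:sq_new} I would substitute $\vb\mapsto d\vb'$, which is a bijection modulo $v$ since $(d,v)=1$; by the quadratic homogeneity of $F$ this turns $F(m\vb+d\vk)$ into $d^2 F(m\vb'+\vk)$, while $\vc\cdot\vb$ becomes $d\vc\cdot\vb'$. Then the rescaling $a\mapsto \ol{d^2}^{(v)}\,a$ is a bijection on $(\Z/v\Z)^*$ that absorbs the factor $d^2$, yielding the claim.

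Plugging this into \eqref{eq:achi}, the next move is the substitution $y=xd\md{mv}$, which is a bijection on $\Z/mv\Z$ because $(d,mv)=1$. The factors $e_{mv}(xd\vc\cdot\vk)$ and $S_v(xd\vc;m,\vk)$ become $e_{mv}(y\vc\cdot\vk)$ and $S_v(y\vc;m,\vk)$, while the character transforms via $\chi_0\chi_\Delta(x) = \chi_0\chi_\Delta(y)\,\chi_0\chi_\Delta(\ol{d}^{(mv)})$. Here $\chi_0(\ol{d}^{(mv)})=1$ since $(\ol{d}^{(mv)},m)=1$, and $\chi_\Delta(\ol{d}^{(mv)})=\chi_\Delta(d)$ because $\chi_\Delta$ is a real-valued Jacobi symbol and $(d,\Delta)=1$. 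Extracting the factor $\chi_\Delta(d)$ gives the required identity for $a_{\chi_0\chi_\Delta}$, and summing over $v$ concludes the proof. The only real subtlety is this character bookkeeping: without the observation that all primes dividing $\Delta$ also divide $m$, one could not even guarantee $\chi_\Delta(d)\ne 0$ and the identity as stated would fail; once that point is in hand everything else is a sequence of routine bijective substitutions enabled by the coprimality conditions $(d,m)=1$ and $v\mid m^\infty$.
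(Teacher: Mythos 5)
Your proof is correct and follows essentially the same route as the paper: the paper performs the substitutions $\vb\mapsto d\vb$, $a\mapsto d^2a$ and $x\mapsto dx$ simultaneously in the triple-sum expression for $a_{\chi_0\chi_\Delta}(v,\vc,m,\vk)$, whereas you carry out the same changes of variable in two stages (first inside $S_v$, then in the $x$-sum), which is only a cosmetic difference. Your preliminary remark that $\Delta\mid mv$ with $v\mid m^\infty$ forces $(d,\Delta)=1$ is a correct and slightly more careful piece of bookkeeping than the paper makes explicit.
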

\begin{proof}
By \eqref{eq:achi} and \eqref{eq:sq_new} we have that $a_\chi(v,\vc,m,\vk)$ is
\[
\sum_{x\md{mv}}\;\starsum{a\md{v}}\sum_{\vb\md{v}} \chi(x) e_{mv}(x\vc\cdot(m\vb+\vk)) e_{v}(aF(m\vb+\vk)).\]
If $(d,m)=1$ we can make the substitutions $d\vb\mapsto \vb$, $x\mapsto dx$ and $a\mapsto d^2 a$ to get $a_\chi(v,\vc,m,\vk) = \chi(d) a_\chi(v,\vc,m,d\vk)$.
Inserting this into \eqref{eq:A} then gives the result.
\end{proof}

\section{Proof of main results}\label{sec:proof}
We are now ready to prove our main results. Assume first that $\det M$ is a square. The proof of Theorem \ref{thm:main} is exactly the same as the proof of \cite[Theorem 7]{heathbrown96}, except that the main term comes from Lemma \ref{lem:c0} instead of an analogous result for $\sum_{q\le X} q^{-4}S_q(\vnil)$, and similarly one needs to use Lemma \ref{lem:cnonzero} instead of an analogous result for $\sum_{q\le X} S_q(\vc)$ to achieve the term of order $P^2$. Note that by \eqref{eq:achi-bound-triv}, $A(\vc,m,\vk)$ can be bounded above by $O_m(1)$ uniformly in $\vc$, and so showing convergence of the sum over $\vc\ne\vnil$ can be done exactly as before.

When $\det M$ is not a square the proof is essentially just a combination of the proofs of \cite[theorems 6 \& 7]{heathbrown96}. As usual $\vc=\vnil$ in \eqref{eq:count} gives rise to a term of order $P^2$, but unlike in Theorem \ref{thm:hb?} the terms with $\vc\ne\vnil$ also contribute something of order $P^2$, in the same way that one gets a secondary term of order $P^2$ in the case where $\det M$ is a square. As this is the most interesting case we repeat a rough sketch of the proofs in \cite{heathbrown96}.

\begin{proof}[Sketch of proof of Theorem \ref{thm:main2}]
Using lemmas \ref{lem:hb16} and \ref{lem:c0} and the identity \eqref{eq:Iq_new} we have by partial summation that
\[\sum_{R<q\le 2R} q^{-4}S_q(\vnil;m,\vk)I_q(\vnil;m,\vk)\ll_{F,w,m,\eps} P^{4}R^{-1/2+\eps}.\]
Since $I_q(\vnil)=0$ for $q\gg P$ this then gives that
\[
\sum_{q>P^{1-\eps}} q^{-4}S_q(\vnil;m,\vk)I_q(\vnil;m,\vk) \ll_{F,w,m,\eps} P^{7/2+2\eps}.
\]
For $q\le P^{1-\eps}$, lemmas \ref{hb:13} and \ref{lem:c0} give that
\[
\begin{split}
\sum_{q\le P^{1-\eps}} q^{-4}S_q(\vnil;m,\vk)I_q(\vnil;m,\vk) = \sigma_\infty(F,w)L(1,\chi_\Delta)\sigma^*(F,m,\vk)m^{-4}P^4 \\+ O_{F,w,m,\eps}(P^{7/2+\eps}).
\end{split}
\]

Lemmas \ref{hb:19} and \ref{lem:sq-bound} together with the fact that $I_q(\vc)$ is supported only for $q\ll P$ gives
\[ \sum_{|\vc|\ge P^\eps} \sum_{q=1}^\infty q^{-4}S_q(\vc;m,\vk) I_q(\vc;m,\vk) \ll_{F,w,m,\eps} 1.\]
For $|\vc|\le P^\eps$ and $\vc\ne \vnil$ we use partial summation together with Lemma \ref{hb:22} and Lemma \ref{lem:cnonzero} to get
\[
\begin{split}
\sum_{R<q\le 2R} &q^{-4} e_{mq}(\vc\cdot\vk) S_q(\vc;m,\vk) I_q(\vc/m) \\
&= \eta(\vc)\sigma'(F,m)A(\vc,m,\vk) \int_R^{2R} t^{-1} I_t(\vc/m)dt + O_{F,w,m,\eps}(P^{3+\eps}R^{2/3+\eps}).
\end{split}
\]
Summing over all $q$ and using that $I_q(\vc)=0$ for $q\gg P$ we then get
\[
\begin{split}
\sum_{q =1}^\infty &q^{-4} e_{mq}(\vc\cdot\vk) S_q(\vc;m,\vk) I_q(\vc/m) \\
&= \eta(\vc)\sigma'(F,m)A(\vc,m,\vk) \int_0^{\infty} t^{-1} I_t(\vc/m)dt + O_{F,w,m,\eps}(P^{11/3+2\eps})
\end{split}
\]
for any $\vc\ne \vnil$. Now we define
\[I_r^*(\vc) = P^{-4} I_{rP}(\vc),\]
which doesn't depend on $P$. The last integral above is then $P^4\sigma_\infty(F,w,\vc/m)$, where we have defined
\[ \sigma_\infty(F,w,\vc) = \int_0^\infty r^{-1}I_r^*(\vc) dr.\]
The convergence of this is shown in \cite{heathbrown96}, but we don't repeat the argument here. In fact it holds that
\[ \sigma_\infty(F,w,\vc) \ll_{F,w,N} |\vc|^{-N}\]
for any $N>0$. For the main term we may therefore extend the sum $\sum_{0<|\vc|\le P^\eps}$ to all $\vc\ne\vnil$, which upon plugging everything into \eqref{eq:count} finally gives us that
\[
\begin{split}
N_{F,w}(P;m,\vk) =& m^{-4}P^2\Big(\sigma'(F,m) \sum_{\vc \ne \vnil} \eta(\vc)A(\vc,m,\vk)\sigma_\infty(F,w,\vc/m) \\
&+ \sigma_\infty(F,w)L(1,\chi_\Delta) \sigma^*(F,m,\vk)\Big) + O_{\eps,F,w,m}(P^{5/3+\eps}),
\end{split}\]
where we have redefined $\eps$. By defining
\[\tau(F,w,m,\vk) = \sigma'(F,m)\sum_{\vc\ne\vnil} \eta(\vc)A(\vc,m,\vk)\sigma_\infty(F,w,\vc/m)\]
we then have the required asymptotic expression.

Finally, $A(\vc,m,\vk)=0$ whenever $\Delta \notdivides m^\infty$, and so the same holds for $\tau(F,w,m,\vk)$. Lemma \ref{lem:A-trans} gives that $\tau(F,w,m,d\vk)= \chi_\Delta(d)\tau(F,w,m,\vk)$ for any $d$ satisfying $(m,d)=1$.
\end{proof}

\begin{proof}[Proof of Corollary \ref{thm:cor}]
For the first part of the corollary we just need that for $(m,d)=1$ it holds that $\sigma^*(F,m,d\vk)=\sigma^*(F,m,\vk)$ by \eqref{eq:sigmap-kdep} together with $\tau(F,w,m,d\vk)=\chi_\Delta(d)\tau(F,w,m,\vk)$.

For the second part of the corollary we simply use \[\tau(F,w,m,d\vk)= \chi_\Delta(d)\tau(F,w,m,\vk),\] together with the fact that $\chi_\Delta(d) =-1$ for exactly half of the values $d\in(\Z/m\Z)^*$ when $\Delta|m^\infty$.  Here we also use that $\Delta\ne 1,2$, as for squarefree $D$ the quadratic character $\left( \frac{D}{\cdot} \right)$ has conductor $|D|$ or $4|D|$, and so $\Delta =2$ will never occur.
\end{proof}

\begin{rem}
Now that we have the precise definition of $\tau(F,w,m,\vk)$ it should be clear what causes the difficulty in analysing it. In particular we see that in addition to understanding $A(\vc,m,\vk)$ one needs to say something more about $\sigma_\infty(F,w,\vc)$, a feat that also would tell us something about $\sigma_1(F,w)$ in Theorem \ref{thm:hb?}. 
\end{rem}

\section{Examples}\label{sec:ex}
We begin by noting that the second result of Corollary \ref{thm:cor} implies a similar result for $N_{F,w}^\text{prim}(P;m,\vk)$. Indeed, by the corollary and \eqref{eq:n-prim} we get that
\[\begin{split}
\starsum{d\md{m}} N_{F,w}^\text{prim}(P;m,d\vk) &= \sum_{\substack{l=1\\(l,m)=1}}^\infty \mu(l) \starsum{d\md{m}} N_{F,w}(P/l; m,d\ol{l}^{(m)}\vk)\\
&= CP^2\sum_{(l,m)=1} \mu(l)l^{-2} + O_{F,w,m,\eps}\left(P^{5/3+\eps}\right)\\
&= \frac{C}{L(2,\chi_0)}P^2+O_{F,w,m,\eps}(P^{5/3+\eps}),
\end{split}\]
where $C =  \sigma_\infty(F,w) L(1,\chi_\Delta) \sigma^*(F,m,\vk) \phi(m)m^{-4}$. Note that we have used that $\sigma^*(F,m,\vk)$ is unchanged when $\vk$ is multiplied by $l\in (\Z/m\Z)^*$.

From the first part of Corollary \ref{thm:cor} together with \eqref{eq:n-prim} we also get that
\[N_{F,w}^\text{prim}(P;m,\vk) - N_{F,w}^\text{prim}(P;m,d\vk) = O_{F,w,m,\eps}(P^{5/3+\eps})\]
whenever $\chi_\Delta(d) = 1$.

If we know that $N_{F,w}^\text{prim}(P;m,\vk)=0$ for some value of $\vk$, as will be the case in both examples, we then get the following.
\begin{prop}
Assume that $\Delta|m^\infty$, $F$ has a non-square determinant and $N_{F,w}^\text{prim}(P;m,\vk)=0$. Then for $(d,m)=1$ and any $\eps>0$ it holds that
\[\begin{split}N_{F,w}^\text{prim}(P;m,d\vk)= &\begin{cases}
0, &\text{if } \chi_\Delta(d) = 1\\
\frac{2}{L(2,\chi_0)}\sigma_\infty(F,w) L(1,\chi_\Delta) \sigma^*(F,m,\vk) m^{-4}P^2, &\text{if } \chi_\Delta(d) = -1
\end{cases}\\&+ O_{F,w,m,\eps}(P^{5/3+\eps})\end{split}
\]
and
\[\begin{split}N_{F,w}(P;m,d\vk)=&
\sigma_\infty(F,w)L(1,\chi_\Delta)\sigma^*(F,m,\vk)m^{-4}P^2\left(1-\chi_\Delta(d)\frac{L(2,\chi_0\chi_\Delta)}{L(2,\chi_0)}\right)\\
&+ O_{F,w,m,\eps}(P^{5/3+\eps}).\end{split}
\]
\label{prop:prim}
\end{prop}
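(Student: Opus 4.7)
The plan is to combine the two facts already noted at the start of Section \ref{sec:ex}: (i) the evaluation
\[
\starsum{d\md m} N_{F,w}^{\text{prim}}(P;m,d\vk) = \frac{C}{L(2,\chi_0)}P^2 + O_{F,w,m,\eps}(P^{5/3+\eps}),
\]
with $C = \sigma_\infty(F,w)L(1,\chi_\Delta)\sigma^{*}(F,m,\vk)\phi(m)m^{-4}$, and (ii) the identity
\[
N_{F,w}^{\text{prim}}(P;m,\vk)-N_{F,w}^{\text{prim}}(P;m,d\vk) = O_{F,w,m,\eps}(P^{5/3+\eps})
\]
whenever $\chi_\Delta(d)=1$. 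The vanishing hypothesis $N_{F,w}^{\text{prim}}(P;m,\vk)=0$ then supplies the base case needed to turn these into explicit main terms.

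For the first formula of the proposition, the case $\chi_\Delta(d)=1$ follows at once from (ii). For $\chi_\Delta(d)=-1$, I would observe that any two $d_1,d_2\in(\Z/m\Z)^{*}$ with $\chi_\Delta(d_i)=-1$ satisfy $\chi_\Delta(d_1\ol{d_2}^{(m)})=1$, so (ii) applied with $d_2\vk$ in place of $\vk$ forces $N_{F,w}^{\text{prim}}(P;m,d_1\vk)=N_{F,w}^{\text{prim}}(P;m,d_2\vk)+O(P^{5/3+\eps})$. Call the common main value $V(P)$. Because $\chi_\Delta$ is a nontrivial character on $(\Z/m\Z)^{*}$ (using $\Delta\neq 1,2$, as noted in the proof of Corollary \ref{thm:cor}), it takes each of the values $\pm 1$ on exactly $\phi(m)/2$ residues. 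Splitting (i) into the $\chi_\Delta=\pm1$ halves thus gives $(\phi(m)/2)V(P) = (C/L(2,\chi_0))P^2 + O(P^{5/3+\eps})$, and solving for $V(P)$ recovers the claimed expression.

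For the second formula, I would apply \eqref{eq:n-normal} to write
\[
N_{F,w}(P;m,d\vk) = \sum_{(l,m)=1} N_{F,w}^{\text{prim}}(P/l;m,\ol{l}^{(m)}d\vk),
\]
a sum that is effectively finite since $w$ has compact support. Inserting the first formula of the proposition into each summand and using $\chi_\Delta(\ol{l}^{(m)}d)=\chi_\Delta(l)\chi_\Delta(d)$, only the summands with $\chi_\Delta(l)=-\chi_\Delta(d)$ contribute a main term. The remaining $l$-sum is then evaluated via
\[
\sum_{\substack{(l,m)=1\\\chi_\Delta(l)=\pm1}} l^{-2} = \tfrac{1}{2}\bigl(L(2,\chi_0)\pm L(2,\chi_0\chi_\Delta)\bigr),
\]
and a short algebraic rearrangement collapses the two cases $\chi_\Delta(d)=\pm1$ into the single factor $1-\chi_\Delta(d)L(2,\chi_0\chi_\Delta)/L(2,\chi_0)$ appearing in the proposition. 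The error $O(P^{5/3+\eps})$ survives summation because $\sum_l l^{-5/3-\eps}$ converges. The only real point needing care---and hence the main, mild, obstacle---is ensuring that the implied constants in (i), (ii) and the first formula are uniform in the base point as $d$ ranges over $(\Z/m\Z)^{*}$, so that summing them against $(P/l)^{5/3+\eps}$ does not inflate the error; this uniformity is built into the statements of Lemma \ref{lem:cnonzero} and Corollary \ref{thm:cor}, where the dependence on $\vk$ enters only through $m$-level data.
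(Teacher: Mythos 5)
Your argument is correct and is essentially the paper's own proof: the first formula is obtained exactly as in the paper by combining the averaged identity (i), the invariance (ii) under multipliers $d$ with $\chi_\Delta(d)=1$, and the equidistribution of $\chi_\Delta$ over $(\Z/m\Z)^*$, while the second formula is derived by the same insertion of the first formula into \eqref{eq:n-normal} and the same evaluation of $\sum_{\chi_0\chi_\Delta(l)=\pm1}l^{-2}$ in terms of $L(2,\chi_0)$ and $L(2,\chi_0\chi_\Delta)$. The only difference is that you spell out the ``common value'' step and the uniformity of the error constants, which the paper leaves implicit.
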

\begin{proof}
The first part follows by the above discussion and the fact that $\chi_\Delta(d) = 1$ for exactly half the values of $d\in (\Z/m\Z)^*$.

For the second part we use \eqref{eq:n-normal} to get
\[\begin{split}N_{F,w}(P;m,d\vk) &= \sum_{(l,m)=1} N_{F,w}^\text{prim}(P/l,m,d\ol{l}^{(m)}\vk)\\
&= C_1P^2\sum_{l: \chi_0\chi_\Delta(l)=1} \frac{1}{l^{2}}+C_2P^2\sum_{l:\chi_0\chi_\Delta(l)=-1}\frac{1}{l^2}+O_{F,w,m,\eps}(P^{5/3+\eps}),
\end{split}\]
where if $\chi_\Delta(d)=1$ we have $C_1=0$ and $C_2 = 2\sigma_\infty(F,w)L(1,\chi_\Delta)\sigma^*(F,m,\vk)m^{-4}$, and otherwise $C_1$ and $C_2$ swap roles. Finally,
\[\sum_{l:\chi_0\chi_\Delta(l)=1}l^{-2} = \frac{1}{2}\sum_{(l,m)=1} (\chi_\Delta(l)+1)l^{-2} = \frac{L(2,\chi_0\chi_\Delta)+L(2,\chi_0)}{2},\]
with a similar expression for $\sum_{l:\chi_0\chi_\Delta=-1}l^{-2}$, finishing the proof.
\end{proof}

\begin{ex}
As in the introduction, let
\[F(\vx) = x_1^2-pqx_2^2-x_3x_4,\]
let $p\equiv q\equiv 1\md{8}$ be odd primes and take $k$ such that $\left(\frac{k}{p}\right)=1$ and $\left(\frac{k}{q}\right)=-1$. Set $m=pq$ and $\vk =(k,0,k,k)$. As shown in the introduction we know that
\[N_{F,w}^\text{prim}(P;m,\vk) = 0,\]
and so Proposition \ref{prop:prim} us applicable.
\end{ex}

\begin{ex}
As a second example, set $m=4$, $\vk= (0,3,3,3)$ and
\[F(\vx) = x_1^2+x_2^2-x_3x_4.\]
Furthermore, let $w$ be a smooth weight function which satisfies $\supp(w)\subset (0,\infty)^4$. Assume that $\vx$ is a primitive solution to $F(\vx)=0$ with $\vx\equiv \vk\md{m}$ and let $p|x_3$ be an odd prime. Then $x_1^2+x_2^2\equiv 0\md{p}$, and as $x_1,x_2$ are positive this implies that $p\equiv 1\md{4}$ or $p|x_1$ and $p|x_2$. In the second case, as $\vx$ is primitive we get $p^2|x_3$. In this way one can show that any prime $p\equiv 3\md{4}$ divides $x_3$ to an even power, contradicting the fact that $x_3\equiv 3 \md{4}$, and so there are no primitive solutions. Proposition \ref{prop:prim} is thus applicable also in this case.
\end{ex}

\bibliographystyle{alpha}
\bibliography{literature}

\end{document}